\theoremstyle{plain}
\newtheorem{thm}{Theorem}[section]
\newtheorem{lem}[thm]{Lemma}
\theoremstyle{definition}
\theoremstyle{remark}
\begin{document}

\title{Product Formalisms for Measures on Spaces with Binary Tree Structures: 
Representation, Visualization, and Multiscale Noise 
\thanks{This work was partially
supported by the AFOSR Program FA9550-10-1-0125 titled ``Applications
to Network Dynamics of Positive Measures and Product Formalisms: 
Analysis, Synthesis, Visualization and Missing Data
Approximation''. The views and opinions expressed in this article do not reflect
those of AFOSR, the Air Force, or the US Government. The third author's work was also partially enabled by DIMACS through support from the National Science Foundation under grant NSF-CCF-1445755 and  award NSF HDR TRIPODS CCF-1934924 }} 

\author{
D. Bassu\\
BlackBoard Insurance\\
\href{mailto:devasis\_bassu@hotmail.com}{devasis\_bassu@hotmail.com}
\and
P. W. Jones\\
Yale University\\
\href{mailto:peterwjones@comcast.net}{peterwjones@comcast.net}
\and
L. Ness\\
Rutgers University\\
\href{mailto:linda.ness@rutgers.edu}{linda.ness@rutgers.edu}
\and
D. Shallcross\\
Perspecta Labs\\
\href{mailto:david.shallcross@perspecta.com}{david.shallcross@perspecta.com}
}

\maketitle

\newpage

\tableofcontents

\newpage
\section{Introduction}

A common fundamental step in data analysis is representation of a data sample by a function or probability distribution. Typically a parametrized family of functions or probability distributions is selected and the parameters for functions or probability distributions in the family which optimally fits the data are computed or learned using optimization.  Ideally the selected family of functions or probability distribution will be based on a rich algorithmizable mathematical theory. However, for many real world data sets the families which currently have the richest algorithmizable theories may impose too many unwarranted assumptions on the data sample and domain. Effectively these families are too small to realistically model the data. Our point of view is that many more broadly applicable theoretically based algorithmizable representations can be mined from mathematics. Ideally the parameters will uniquely determine the representations and be interpretable by the associated theory.  Using such representations, the hope is that  reproducibilty of the data analysis will be possible within the theory, apart from ad hoc pre-processing before the representation step and translation from the theoretical language of the representation into the language of the data domain.  

In this paper we present a theoretical foundation for a representation of a data set as a measure in a very large hierarchically parametrized family of positive measures, whose parameters can be computed explicitly (rather than estimated by optimization), and illustrate its applicability to a wide range of data types. The pre-processing step then consists of representing the data set as simple measures. The representation uses the very simple concept of a dyadic tree, and hence is widely applicable, easily understood, and easily computed. Since the data sample is  represented as a measure, subsequent analysis can exploit statistical and measure theoretic concepts and theories. Since the representation uses the very simple concept of a dyadic tree defined on the universe of a data set and since the parameters are simply and explicitly computable and easily interpretable and visualizable, we hope that this approach will be broadly useful to mathematicians, statisticians, and computer scientists who are intrigued by or involved in data science including its mathematical foundations \footnote{This work was partially
supported by the AFOSR Program FA9550-10-1-0125 titled ``Applications
to Network Dynamics of Positive Measures and Product Formalisms: Analysis, Synthesis, Visualization and Missing Data Approximation''. The views and opinions expressed in this article do not reflect those of AFOSR, the Air Force, or the US Government. The third author's work was also partially enabled by DIMACS through support from the National Science Foundation  grant NSF-CCF-1445755 and  award NSF HDR TRIPODS CCF-1934924}. 

We focus on positive measures defined on dyadic sets which are sets with an ordered binary tree of subsets. An example is the partition of the unit interval into dyadic subintervals. The measures are defined on the sigma algebra generated by the  subsets in the binary tree.  We  present theoretical results  based on theorems in \cite{FeKePi91} \cite{BeurlingAhlfors56} \cite{Ah66}  to obtain a dyadic product formula representation lemma and  a visualization theorem. We also define  an additive multiscale noise model for dyadic measures and a general multiplicative multiscale noise model. The dyadic product formula representation lemma provides an explicit set of product coefficient parameters which are sufficient to characterize measures on dyadic sets. The visualization theorem shows that measures whose product coefficients satisfy a mild condition can be represented by plane Jordan curves and characterizes the uniqueness of the representing curves. 

Our first contribution is to formulate the existing mathematical results in terms of a common set of statistical parameters, the product coefficient parameters. Our theorems then 
provide a mathematical basis for a new algorithmizable multiscale methodology for representation of a broad class of real world data sets as measures and for representation of these measures as Jordan plane curves, enabling visualization of the data. Our additive noise model enables sampling from the space of dyadic measures, given a particular dyadic measure and could potentially be the basis for definition of a hypothesis test for dyadic measures. Our definition of a multiplicative multiscale noise function is self-contained and explicit. It enables $\epsilon$ perturbation of continuous functions, dyadic measures and Borel measures. Representation, visualization, and noise models are fundamental problems in data analysis, They are not typically  addressed by a single mathematically based approach. 

Our second contribution is illustration of application of the methodology in several applications to real world network and sensor data.  The explicit formulas for the representation and noise parameters imply that the representation and the multiscale noise models are easily computable from the pre-processed data. The normalized nature of the parameters imply that elementary visualizations of the parameters are easily constructed and curve visualizations are easily approximated.  The applications are intended to  illustrate the utility and practicality of the methodology for supervised and unsupervised machine learning  on a wide range of real world data types.  

We hope that this paper will be useful to mathematicians, statisticians, and computer scientists who are intrigued by or involved in either fundamental mathematical principles for data science, data science applications or both and  have included references to both types of related work.

The outline of the paper is: Section \ref{sec:relatedwork}  summarizes related work; Section \ref {sec:representation} defines, proves and illustrates the product formula representation for measures on dyadic sets; Section \ref{sec:visualization} presents  the and proves measure visualization theorem and illustrates its application; Section \ref{sec:noisemodel} defines the additive and multiplicative  multiscale noise models and formulates their basic properties. Section \ref{sec:summary} summarizes the results. 

\section{Related Work} \label{sec:relatedwork}

The dyadic product representation was invented by Alfred Haar and used in 1910 to show that for $L^2$ functions, the corresponding Haar martingale converges almost everywhere. A recent use of the dyadic product formula (i.e. using Haar's methods) was made explicit in \cite{FeKePi91} . 

The multiscale representation of positive measures provided by the dyadic product formula representation is reminiscent of wavelet representation of square integrable functions. The key differences are: it applies to measures on dyadic sets, provides a multiplicative representation of positive measures parameterized by a normalized set of multiscale parameters, and applies to a broader class of measures than measures determined by square integrable functions. 

Representation theorems such as the universal approximation theorem\cite{Hornik89} \cite{Cybenko89} provide the early theoretical basis for deep learning \cite{Goodfellow16}. These theorems prove that feedforward networks can approximate Borel measureable functions to specified levels of accuracy. We remark that the class of measures representable by dyadic product formula representations includes  Borel measures, even  Dirac delta Borel measures. Deep learning exploits (and learns) deep general tree structures to achieve efficiency of representation of functions. See \cite{Goodfellow16} for a discussion of and references to this research area. The parameters in deep learning are typically computed from a data sample via gradient descent optimization to minimize a loss function on the parameter space. Deep invariant scattering convolution networks \cite{bruna-mallat} \cite{bruna-szlam-lecun}  exploit wavelets and scattering transforms \cite{mallat-scat} to compute  compute locally translation invariant representations of signal data (e.g. images)  stable to local deformation. The parameters in the dyadic product representation are explicitly computed via a simple formula from the values of the measure on the tree structured set on the universe from which the data was sampled. Hence they appear to be simply interpretable. Other representations computed without optimization  include diffusion maps  \cite{coifman-lafon} \cite{coifman-etal} and Laplace Eigenmaps \cite{belkin-niyogi}. Both of these are non-linear dimension reduction representations related to manifold learning. They require computation of eigenvalues and eigenvectors. Techniques for approximating  the most  significant eigenvalues and eigenvectors from samples include \cite{nystrom} \cite{bloom-LAE-PCA}. A universal approximation theorem for functions on manifolds using four layer neural networks has also been proved \cite{Shaham}.

Dyadic sets can equivalently be described as sets on which an ordered set of binary-valued feature functions have been defined. The topology of the support of measures on these sets can be characterized using homological dimensions because the binary features and a measure determine a simplicial complex\cite{Ness19}.  

The vocabulary of a text data set provides a rich real world example of binary features.  Recent progress in text analysis \cite{GloVE} \cite{word2vec} \cite{Goldberg16}  has focused on algorithms for computing vector representations of words in the vocabulary of a large set of sequences of words (e.g. sentences), where the dimensions of the vectors are much lower than the size of the vocabulary. These low dimensional representations have been experimentally shown to be superior to previous methods for binary word features because semantically meaningful analogies are represented as simple linear relations.  

Our visualization theorem in Section \ref{sec:visualizationtheorem} exploits results \cite{BeurlingAhlfors56} \cite{Ah66} from the the theory of quasi-conformal mapping to prove that measures with mild constraints on their product coefficient parameters can be visualized as Jordan plane curves which are welding curves. In  \cite{Mumford06} a relationship between the $2D$ shape classes of infinitely smooth Jordan curves and the diffeomorphism classes of their welding maps is shown. Our visualization theorem applies to a much larger class of measures than the class of measures determined by the shape classes of  infinitely smooth Jordan curves.

Multiplicative models for chaos are defined in  \cite{mandelbrot-1972} \cite{KaPe1976} \cite{Ka85}.  They perturb Lebesgue measure. Our multiplicative noise model is explicitly defined and can be used for controlled perturbation of continuous functions, dyadic measures and Borel measures. White noise is additive and related to Brownian motion. Recent work \cite{GrBeJo} provides an exposition of L\'{e}vy's formulation of Brownian motion in terms of explicit dyadic multi scale formalisms. Section III Part B) of \cite{GrBeJo} discusses the relation to white noise, pointing out that it is a distribution, not a measure. Our additive noise model randomly perturbs the product coefficients for dyadic measures producing other dyadic measures. 

\section {Product Formula Representation of Measures on Dyadic Sets} \label{sec:representation}
\subsection {Dyadic sets and product coefficient parameters}
We define a \textit{dyadic set} $X$ to be a set which has an ordered binary set system consisting of disjoint left and right child subsets for each parent set, whose union is the parent set. The set $X$ is a parent set and the root of the ancestor tree. A binary set system can be finite or infinite.  A binary set system determines a binary tree whose nodes are the sets in the binary set system. Sometimes we will refer to the sets in the binary set system as the dyadic (sub)sets of  $X$. A  \textit{positive measure} $\mu$ on a sigma algebra generated by a  binary set system for the dyadic set $X$ is determined by an additive non-negative function on sets in the binary set system with the constraint $\mu(X) > 0$. In other words, the measure of the left child $L(S)$ plus the measure of the right child $R(S)$  is the measure of their parent $S$. $$ \mu(L(S)) + \mu(R(S)) = \mu(S)$$ 
Assume we are working (e.g.) on $X = [0,1]$ and $S$ is one of the intervals arising. If $S = [a,b]$, we define
\begin{equation} 
L(S) = [a, (a+b)/2)
\end{equation}
and
\begin{equation}
R(S) = [(a+b)/2,b)
\end{equation}
unless $b = 1$, when $R(S) = [\frac{a+b}{2},1]$. This is required in order for a given function or measure to be properly reproduced.

Thus positive measures never take negative values on sets in the sigma algebra generated by the the sets in the binary set system, but even if the measures of all sets in an infinite binary set system are positive there may be sets in the generated sigma algebra whose measure is zero (e.g. the measure of a point in the unit interval is zero even though the measures of all of the dyadic intervals is positive). This is because the sigma algebra contains all sets generated from sets in the binary set system by countable union, countable intersection and complementation. 
 If the total volume of the measure is $1$, the measure determines a  probability distribution on the sigma algebra of sets generated by the sets in the binary set system. 
The simplest such measure is the naive measure $dy$ which assigns $dy(X) = 1$ and assigns to the left and right children half the measure of their parent.
$$dy(L(S)) = \frac{1}{2}dy(S)$$
$$dy(R(S)) = \frac{1}{2}dy(S)$$
The dyadic product formula representation for a measure $\mu$ on a dyadic set $X$ is a product of factors $1 + a_S h_S$, each of which is a function on $X$. There is one factor for each parent set $S$ (i.e. each non-leaf set)  in the binary set system. In the factor $1 + a_S h_S$, $h_S$ denotes the haar-like function defined to have value $1$ on $L(S)$, $-1$ on $R(S)$, and $0$ on $X-S$.  
\begin{equation} \label{def:hsubS}
h_S = 1 \: on \: L(S)
\end{equation}
\begin{equation}
h_S = -1 \: on \: R(S)
\end{equation} 
\begin{equation}
h_S = 0 \: on \: X-S
\end{equation}
In each factor $a_S$ is the \textit{product coefficient parameter} defined as a solution to the following equations:
\begin {equation} \label{def:pc}
\mu(L(S)) = \frac{1}{2}(1 + a_S)\mu(S)
\end {equation}
\begin {equation}
\mu(R(S)) = \frac{1}{2}(1 - a_S)\mu(S)
\end{equation}
A unique solution to the equations exists if $\mu(S) \neq 0$. If $\mu(S) = 0$ the solution is not unique. To make the product coefficients unique we adopt the convention that whenever one of the parts of a binary set has measure zero, the product coefficients for all of the descendant sets  of the zero measure part have zero product coefficients. This convention implies that if $\mu(S) = 0$  the solution $a_S = 0$ is chosen.

The product coefficient $a_S$ is the amount by which the relative (conditional) measure of the left part of $S$  exceeds the relative (conditional) measure of the right part of $S$.  The use of relative/conditional measure rather than absolute measure means that the product coefficients are self-rescaling. The product coefficients are bounded:
\begin{equation}
-1 \leq a_S \leq 1
\end{equation}
Note $\left|a_S\right|=1$ only if either $\mu\left(L\left(S\right)\right) = 0$ or $\mu\left(R\left(S\right)\right) = 0$ (but not both).  

\subsubsection {Product coefficients for measures on general trees}
If a set $X$ has an ordered set system with an ordered tree structure (not necessarily binary) in which the root set is $X$  and the child sets for a parent set are disjoint whose union is  the parent set, we will say that the set $X$ has a \textit{tree set system}. A positive measure $\mu$ on the sigma algebra generated by the sets in the tree set system is determined by an additive non-negative function on the sets in the tree set system. We can define a set of $n$ product coefficients for a parent set $S$, which has $n$ children $C_i$, $i = 1 \ldots n$, as the solution to the system of equations
$$ \mu(C_i) = \frac{1}{n}(1 + x_i) \mu(S), i = 1 \ldots n$$

$$\sum\limits_i^n x_i = 0$$
If the tree is an ordered binary tree, the two product coefficients are additive inverses of each other and the convention we use in the previous section chooses the first one. For the remainder of the paper we will focus on dyadic sets. 

\subsection{Dyadic product formula representation lemma} 
\begin{lem} [Dyadic Product Formula Representation] \label{representationlemma}
Let $X$ be a dyadic set with a binary set system whose non-leaf sets are $\mathcal{B}$. Let $\mathcal{B}_n$ denote the non-leaf dyadic sets which are at distance at most $n$ from the root $X$ of the dyadic set system. 
\begin {enumerate}
	\item If  $\mu$ is a positive measure on $X$ with product coefficients $a_S, S \in \mathcal{B}$, the weak star limit 
	$${\mu(X)\displaystyle \prod_{ S \in \mathcal{B}}} (1+a_S h_S) dy$$ of the partial product measures 
	\begin{equation} \label{eq:mun}
	\mu_n = {\mu(X)\displaystyle \prod_{ S \in \mathcal{B}_n}} (1+a_S h_S) dy.
	\end{equation}
	exists and 
	$$ \mu = {\mu(X)\displaystyle \prod_{ S \in \mathcal{B}}} (1+a_S h_S) dy$$  
	\item For any assignment of parameters $a_S$ from $(-1,1)$ and choice of  $\mu(X) > 0$ 
	the weak star limit 
	$$ {\mu(X)\displaystyle \prod_{ S \in \mathcal{B}}} (1+a_S h_S) dy$$ of the partial product measures $$\mu_n = {\mu(X)\displaystyle \prod_{ S \in \mathcal{B}_n}} (1+a_S h_S) dy$$ exists. The limit measure  
	 is positive on all sets $S$ in the binary set system;  its product coefficients are the parameters $a_S$ and its total mass (and expected value)  is $\mu(X)$.
	 \item For any assignment of parameters $a_S$ from $[-1,1]$ and choice of  $\mu(X) > 0$ 
	the weak star limit 
	$$\mu(X)\displaystyle \prod_{ S \in \mathcal{B}} (1+a_S h_S) dy$$ of the partial product measures $$\mu_n = {\mu(X)\displaystyle \prod_{ S \in \mathcal{B}_n}} (1+a_S h_S) dy$$ exists. The limit measure is 
	 positive;  its total mass (and expected value)  is $\mu(X)$.
 	 If the parameters are assigned using the convention that zero value parameters are assigned to the descendant of "halves" of a binary set with zero measure, the parameters are the product coefficients.
\end {enumerate}
\end {lem}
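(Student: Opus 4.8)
The plan is to reduce all three parts to one computation: for every dyadic set $T$ the partial‑product measures $\mu_n$ (which I take to carry the mass factor, $\mu_n=\mu(X)\prod_{S\in\mathcal B_n}(1+a_Sh_S)\,dy$) stabilise, once $n$ is large enough that $\mathcal B_n$ contains all proper ancestors of $T$, to an explicit telescoping product. Fix $T$ at depth $k$ with ancestor chain $T=T_k\subsetneq T_{k-1}\subsetneq\dots\subsetneq T_0=X$, and let $\epsilon_j=h_{T_j}|_T\in\{\pm1\}$ record whether $T_{j+1}$ is the left or right child of $T_j$. Splitting $\mathcal B_n$ into the proper ancestors of $T$, the sets contained in $T$, and the sets incomparable with $T$: the incomparable sets contribute factors identically $1$ on $T$; the ancestors contribute the constants $1+a_{T_j}\epsilon_j$; and the sets inside $T$ contribute a product whose integral over $T$ against $dy$ is $dy(T)$. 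This last identity is the only orthogonality input: expanding $\prod_{S\subseteq T}(1+a_Sh_S)$ and using that $\int_T\prod_{S\in F}h_S\,dy=0$ for every nonempty finite $F$ — which reduces to the chain case (two incomparable sets have disjoint supports, so their product of $h$'s vanishes) and, for a chain $S_1\supsetneq\dots\supsetneq S_m$, all factors but $h_{S_m}$ are constant on $S_m$ while $\int_{S_m}h_{S_m}\,dy=0$ — kills every cross term. Hence for $n\ge k-1$,
\[
\mu_n(T)=\mu(X)\,2^{-k}\prod_{j=0}^{k-1}\bigl(1+a_{T_j}\epsilon_j\bigr)=:\nu(T),
\]
independent of $n$.

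\textbf{Part 1.} If $\mu$ is given with product coefficients $a_S$, equations (1)–(2) say exactly that $\mu(T_{j+1})=\tfrac12(1+a_{T_j}\epsilon_j)\mu(T_j)$, so telescoping from $j=0$ to $k-1$ yields $\mu(T)=\mu(X)2^{-k}\prod_{j<k}(1+a_{T_j}\epsilon_j)=\nu(T)$. Thus $\mu_n$ and $\mu$ agree on every dyadic set for all large $n$; since the dyadic sets generate the $\sigma$‑algebra (and dyadic step functions form a dense algebra in the relevant space of test functions, with $\sup_n\mu_n(X)=\mu(X)<\infty$), this stabilisation identifies the weak‑$*$ limit as $\mu$.

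\textbf{Parts 2 and 3.} Here $\mu$ is not given, so I use the displayed formula to \emph{define} a set function $\nu$ on the dyadic sets, with $\nu(X)=\mu(X)$. It is non‑negative since $1+a_{T_j}\epsilon_j\ge 1-|a_{T_j}|\ge 0$ when $a_S\in[-1,1]$ (strictly positive when $a_S\in(-1,1)$), and additive since $\nu(L(S))=\tfrac12(1+a_S)\nu(S)$ and $\nu(R(S))=\tfrac12(1-a_S)\nu(S)$ sum to $\nu(S)$. A non‑negative additive set function on the dyadic sets with $\nu(X)=\mu(X)>0$ extends to a positive measure on the generated $\sigma$‑algebra (on the natural space the dyadic sets are compact‑open, so finite additivity already gives countable additivity on the generating algebra, and Carath\'{e}odory finishes); and since $\mu_n(T)=\nu(T)$ for large $n$, the same density argument gives $\mu_n\to\nu$ weak‑$*$, with total mass (and $\int_X d\nu$) equal to $\mu(X)$. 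For the coefficients: if $a_S\in(-1,1)$ then every $\nu(S)>0$, so (1)–(2) for $\nu$ have the unique solution $a_S$, proving Part 2. For Part 3 call $S$ \emph{dead} if $\nu(S)=0$; by the formula this occurs exactly when the ancestor chain of $S$ passes through a $T_j$ with $a_{T_j}=-\epsilon_j\in\{\pm1\}$, so the dead sets are precisely the zero‑measure halves and their descendants. On a set $S$ that is not dead, $\nu(S)>0$ and the unique solution of (1)–(2) is $a_S$ (still true when $a_S=\pm1$, which merely makes one child dead); on a dead $S$, $\nu(S)=0$ and the convention assigns product coefficient $0$, matching the parameter $a_S=0$ prescribed by the hypothesis's convention. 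Hence the parameters are the product coefficients of $\nu$.

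\textbf{Main obstacle.} The genuinely delicate points are (i) making ``the weak‑$*$ limit exists'' and ``$\nu$ is a measure'' precise in the deliberately abstract setting of the paper — which I handle by working on the generating algebra, where the compact‑open structure reduces countable additivity to finite additivity before invoking Carath\'{e}odory; and (ii) in Part 3, the bookkeeping reconciling the convention on zero‑measure halves with the constructed measure, which hinges on the fact that a half's being null is decided by a \emph{strictly shallower} parameter (the ancestor with $a_{T_j}=\pm1$), so the convention can be applied from the root downward with no circularity. The Haar‑orthogonality computation behind the displayed formula is routine once the reduction to chains is noted.
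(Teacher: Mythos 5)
Your proposal is correct and follows essentially the same route as the paper's proof: both rest on the observation that the partial products stabilize on each dyadic set $T$ once all of $T$'s proper ancestors appear in $\mathcal{B}_n$ (the paper phrases this as ``the product over the subtree rooted at $S$ times the naive measure is a probability measure,'' which is exactly your Haar-orthogonality identity $\int_T\prod_{S\subseteq T}(1+a_Sh_S)\,dy=dy(T)$), and both then read off the product coefficients from the stabilized two-term recursion. Your version merely makes explicit several steps the paper asserts without argument — the telescoping formula for $\nu(T)$, the extension to the generated $\sigma$-algebra, and the non-circularity of the zero-measure convention in Part 3 — so it is a more careful rendering of the same proof rather than a different one.
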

\begin{proof}
The dyadic product formula for non-negative measures using these factors appeared in \cite{FeKePi91}  for $X = [0,1]$ and its dyadic intervals of length $2^{-k}, k = 0, 1, ....$. We follow their proof to show that it is valid for the more general case of dyadic sets. Let $\mathcal{B}_n$ denote the non-leaf dyadic sets which are at distance at most $n$ from the root $X$ of the dyadic set system. We first prove the second and third parts of the Lemma. For any assignment of parameters $a_S$ from $[-1,1]$ the partial product formula
$${\displaystyle \prod_{ S \in \mathcal{B}_n}} (1+a_S h_S) dy.$$
determines a probability measure $\mu_n$ on the sigma algebra determined by the dyadic set system $\mathcal{B}_n$ and its child nodes. Because  the probability measures $\mu_n$ all have the same total volume, they converge in the weak-$\star$ sense to a probability measure $\mu$  on the original dyadic set system. And this probability measure $\mu$ has the  product formula $$\mu = {\displaystyle \prod_{ S \in \mathcal{B}}} (1+a_S h_S) dy$$ which is infinite if the original dyadic set system tree has infinite depth. The order in the product is assumed to be lexicographic, by depth in the tree and then left to right in the tree for each depth. Let $S$ denote a leaf set in $\mathcal{B}_n$. Then the product formula for $\mu_n$ implies that 
\begin {equation}
\mu_n(L(S)) = \frac{1}{2}(1 + a_S)\mu_n(S)
\end {equation}
\begin {equation}
\mu_n(R(S)) = \frac{1}{2}(1 - a_S)\mu_n(S)
\end{equation}
If $\mu_n(S) > 0$, the equations have a unique solution,  so $a_S$ is the product coefficient of $\mu_n$ for $S$. If $\mu_n(S) = 0$,  there is not a unique solution. We adopt the convention that when  the measure of one of the "halves" of $S$ is zero, all of the product coefficients for its descendant intervals are zero. Hence if $\mu_n(S) = 0$, this convention chooses the solution $a_S = 0$.  For $m > n$, let $\mathcal{B}^S_m$ denote the dyadic set system consisting of $S$ and its descendants in $\mathcal{B}$ at distance $m-n$ from $S$. Let $ p_m = {\displaystyle \prod_{ T \in \mathcal{B}^S_m}} (1+a_T h_T)$ denote the function  defined by the product formula for this dyadic set system. It is a constant function on the children of the leaves of  $\mathcal{B}^S_m$. And let $dy^S_n$ denote the naive measure on $\mathcal{B}^S_m$. Then $p_m dy^S_n$ is a probability measure (as above) so 

$$\mu_m(S) = \mu_n(S) \int_S \! p_m \, \mathrm{d}y^S_n = \mu_n(S)$$
By the argument above the weak star limit of the product measures $p_m dy^S_n$ exists and the volume of $S$ in the limit measure $\mu(S) = \mu_n(S)$. Thus 
\begin {equation}
\mu(L(S)) = \frac{1}{2}(1 + a_S)\mu(S)
\end {equation}
\begin {equation}
\mu(R(S)) = \frac{1}{2}(1 - a_S)\mu(S)
\end{equation}
This implies that for sets of positive measure, the parameters in the product formula are the product coefficients and for sets of zero measure the parameters in the product formula are the product coefficients if the solution is chosen to be zero. This proves the second and third statements in the Lemma. To prove the first part, note the the partial product formula measures $\mu_n$  with $a_S$ defined to be the product coefficients of $\mu$ define measures on $\mathcal{B}_n$  which equal the restriction of $\mu$ to $\mathcal{B}_n$ . Arguing as above these partial product measures converge to a measure which equals $\mu$ on the dyadic sets which generate the sigma algebra.
\end {proof}
\subsection{Exploiting product coefficient parameters to compute moments}
The dyadic product formula representation lemma \ref{representationlemma}  implies that all of the standard statistics (e.g., moments including expected values and variances, entropies,
information dimensions, as well as the Kullback-Liebler divergence) for a dyadic measure can be computed from the set of product coefficient parameters and the total measure, which is the expected value of the measure. 

In this section, we will show that the higher moments for a finite dyadic probability measure $\mu$ can be computed recursively in terms of  the expected value and the product coefficients. A finite probability dyadic measure $\mu$ is simply a dyadic measure whose associated binary tree has finite depth and whose total measure is 1. Denote the depth by $n+1$. Then using the notation of the representation lemma $\mu = f\cdot dy$ where $$f=  {\displaystyle \prod_{ S \in \mathcal{B}_n}} (1+a_S h_S)$$  The product involves only sets of level $n$ because product coefficient parameters are only computed for non-leaf nodes. Thus a finite dyadic measure is the measure defined by a random variable. For each integer $i \geq 0$, the  $i^{th}$ moment $M_{i}$ of a measure defined by a random variable is the measure defined by the $i^{th}$ power of the random variable:
$$M_i =  \int f^i dx$$

The first moment  $M_1 = \mu(X)$  because $f$ determines a probability measure.  The second factor is the $i{th}$ moment of the probability measure $fdx$. We will show that the higher moments 
of the probability measure $\mu = fdx$ can be computed recursively from the product coefficients.  

On a dyadic set $S$ which is not a leaf node set, the representation lemma \ref{representationlemma} implies that $f$ can be written as
\begin{equation} \label{eq:sum1}
f = (1 + a_S h_S) \cdot (f_{L(S)} + f_{R(S)}) 
\end{equation}
where $L(S)$ and $R(S)$ are the left and right child sets of $S$ in the dyadic set system and $f_{L(S)}$ and $f_{R(S})$ are functions which are zero on the other half, i.e. on $R(S)$ and $L(S)$ respectively and hence determine mutually singular measures. This implies that
\begin{equation} \label{eq:sum2}
f^i  = (1+a_S)^i \cdot f_{L(S)}^i + (1-a_S)^i \cdot f_{R(S)}^i
\end{equation}
If $L(S)$ and $R(S)$ are not leaf node sets, equation \ref{eq:sum2} can be applied recursively. If $L(S)$ and $R(S)$ are leaf node sets, $f_{L(S)} = 1$ and $f_{R(S)} = 1$. This process represents $f^i$ as a sum of  functions, one for each dyadic leaf set,  which are zero off that leaf set and constant on the leaf set. The constant value of each summand function is the product of terms $ (1 \pm a_S)^i$, where the sets $S$ are the dyadic sets on the path from the root of the tree to the node set preceding the leaf set, and where the signs are $-1$ if $S$ is a left child set and $+1$ if $S$ is a right child set. 
The $i^{th}$moment $M_i$ can be computed by integrating the sum of functions, i.e. by multiplying the sum of the products by $2^{-(n+1)}$, where the exponent is the depth of the dyadic structure tree, since the measure $dx$ assigns this measure to the dyadic sets at that depth. 

These formulas suffice to compute all of the centralized moments (e.g. the variance) because centralized moments $M_{i,centralized}$ of $f$ are defined to be 
\begin{equation}
M_{i,centralized} = \int (f - E(f))^i dx
\end{equation}
and hence are just linear combinations of moments which can be computed by the recursive formulas. Here $E(f) = 1$ because $f$ is a probability measure. 

The formulas obtained by the recursive procedures explicitly compute the moments of a probability measure as polynomials of degree $n$ in the product coefficients. The simplest application of the procedure shows that the variance of the simple measures $(1 + hS) dx$ on a dyadic set $S$ of depth 1 is $a_S^2 \cdot \mu(S)^2$, so the variance of a dyadic measure is a polynomial in its expected value and the variances of the depth 1 variances of the dyadic sets at the various scales, and the product coefficient is intuitively a type of signed standard deviation. From this intuitive point of view, dyadic measures provide a multiscalegeneralize simple Gaussians which are characterized their expected value and variance.

To make the variance polynomial more concrete, note that 
its lowest order term is
\begin{equation} \label{varapprox}
Var(\mu)_{degree 2}  =  \sum\limits_{s = 0} ^n \frac{1}{2^s} \sum\nolimits_{S \in \mathcal{L}_s} a_S^2
\end{equation} 
where $\mathcal{L}_s$ is the set of scale $s$ sets, i.e. sets in the binary set system at distance $s$ from the root $X.$ This quadratic term of the variance only approximates the variance well if the absolute values of all of the product coefficients are  small. Formula \ref{varapprox} can be used as a norm for the measures and  a distance  between measures with the same total measure (expected value). 

\subsection{Examples of dyadic structures and product formula measures}
The purpose of this section is to demonstrate that product formula representations exist for broad classes of measures and broad classes of real world data sets. Product formula representations exist for positive measures on dyadic sets. To illustrate the utility and ubiquity of dyadic sets we will first show that dyadic set structures are equivalent to sets of ordered binary feature functions. Real world data sets are often samples from universes on which semantically meaningful binary features are defined. Equivalently, it is almost always possible to define a semantically meaningful dyadic structure by repeatedly breaking a domain dependent universe up into two parts. Then using the dyadic structure determined by the binary digit feature functions, we will show that Borel measures on the unit hypercube have product formula representations. Thus all functions studied in calculus determine measures on dyadic sets which have product formula representations, including the simple step functions, unique with respect to this natural choice of dyadic structure. Next we will show that dirac delta functions and mixtures of dirac delta functions have product formula representations. These are the infinite analog of counting measures and weighted sums of counting measures, where the counts are relative to the dyadic sets determined by the dyadic sets or equivalently the binary feature function sets. 
Real world data sets are finite and data points are often paired with a numerical observation at each data point and hence can be viewed as mixtures of dirac delta functions. They thus have unique product formula representations relative to an ordered set of binary features. This includes windows of  time series data using the dyadic structures determined by binary digit feature functions. 

\subsubsection{ Dyadic sets and ordered binary feature functions} 
 We first show that dyadic sets are equivalent to sets with an ordered set of binary feature functions. Assume $X$ is a dyadic set with an ordered binary set system $\mathcal{B}$ consisting of sets $B_{(n,i)}$ at distance, i.e. level $n$ from the root $X$ and position $i = 0,1, ..., 2^n - 1$ in the ordering for level $n$. The sets at each level form a partition of $X$. 
Define an ordered set of binary features functions $F_n: X \rightarrow \{0,1\} $, one for each level $n$ in the ordered binary set system, by $F_n(x) = 0$ if $x$ is in a left child set, i.e. if x is a member of $B_{n,i}$ and $i$ is even and $F_n(x) = 1$ if $x$ is in a right child set, i.e. i.e. if x is a member of $B_{n,i}$ and $i$ is odd. Conversely given an ordered set $\mathcal{F}$ of binary feature functions $F_n$, where $n$ ranges from 1 to $card(\mathcal{F})$, observe that for each $n$ in the range, the vector-valued function $(F_1, ..., F_n)$ takes on $2^n$ possible values, which can be ordered lexicographically in increasing order. Define $B_{(n,i)}$ to be the subset of $X$ which $(F_1, ..., F_n)$ maps to the $i^{th}$ value in the ordering, where the ordering is indexed starting at 0. 
Note that any ordered collection $\mathscr{F}$ of proper subsets $F_ i\subset X$, with the property that 

\begin{equation}
F_i \in \mathscr{F} \rightarrow \left(X-F_i\right) \not\in \mathscr{F}, \text{ } i = 1,2,\ldots
\end{equation}
and $F_0 = X$.
determines a set of binary feature functions, namely the indicator functions for these subsets, and hence a dyadic structure on $X$. 

An important point is that permutations of finite subsets of the binary feature functions changes the representation of the measure (but not the measure). However, often the order of the binary features is naturally suggested  by the problem domain.  

The prototypical example of a dyadic set is the closed unit interval $X = [0,1]$.  For $X=\left[0,1\right]$ define a binary set system $\mathcal{D}$ consisting of the half open interval dyadic intervals
$I\left(n,i\right) = \left[i2^{-n},\left(i+1\right)2^{-n}\right)$ for $i=0,1,\ldots,2^n-2$ and the
closed dyadic intervals $I\left(n,i\right) = \left[i2^{-n},\left(i+1\right)2^{-n}\right]$ for $i=2^n-1$.
Here $n$ is any non-negative integer. This infinite collection of dyadic intervals collection forms a binary tree: $L\left(I\left(n,i\right)\right) = I\left(n+1,2i\right)$ and
$R\left(I\left(n,i\right)\right) = I\left(n+1,2i+1\right)$. The corresponding set of ordered binary feature functions are the binary digit feature functions $D_i: [0,1] \rightarrow \{0,1\}$ defined 
by $D_{i}(x) $  equals the $i^{th}$ digit in the binary expansion of x.  Here for 1, we use the infinite binary expansion consisting of all 1's. 

The previous discussion shows that dyadic sets are simple because there is a natural mapping from a dyadic set to the closed unit interval, with the binary feature functions mapping to the binary digit functions. It also shows that a data set which is a subset of dyadic set, which is accompanied by numerical observations can be viewed as a time series on the unit interval. In fact, measures on the dyadic sets determined by the binary feature functions determine canonical simplicial complexes \cite{Ness19}. 

\subsubsection{ Borel measures on the unit hypercube} 
A Borel measure on a topological space $X$ is a measure defined on the sigma
algebra $\mathcal{B}_{orel}$ generated by the open sets of $X$, i.e., on sets generated by the operations of countably infinite
unions, countably infinite intersections and complements of open sets. 

The one-dimensional hypercube is the unit interval $X = [0,1]$. The binary set system $\mathcal{D}$, determined by the binary digit functions, generates the Borel sigma algebra algebra  
$\mathcal{B}_{orel}$ via the operations of countably infinite
unions, countably infinite intersections and complementation. Hence dyadic measures are equivalent to Borel measures, so can be represented uniquely as product formulas with respect to the binary set system $\mathcal{D}$ determined by the binary digit features.

The dyadic measure $dy$ for this binary set system is defined to be the restriction to $\mathscr{B}$ of the usual Lebesgue measure $ds$ on $\left[0,1\right]$
which measures an interval by its length
\begin{eqnarray}
dx\left(\left(a,b\right)\right) & = & dx\left(\left[a,b\right)\right) \\
& = & dx\left(\left(a,b\right]\right) \\
& = & dx(\left(\left[a,b\right]\right) \\
& = & b - a
\end{eqnarray}
for $a < b$.

One way to construct a scale $n$ approximation to a Borel measure is to define a non-negative step function on the dyadic sets of length $2^{-n-1}$ and the product formula for this measure can be computed using a bottom-up algorithm (e.g. by averaging). 

For higher dimensional unit cubes of dimension $m$, a binary set system can be defined choosing its binary features to $\mathcal{D}_i^j$ denote the $i^{th}$ binary digit function for dimension $j$ where $i$ ranges from $1$ to $n$ and $j$ ranges from $1$ to $m$. This is equivalent to defining the binary set system by successively halving the sides of each cube along the dimensions $1$ to $m$. This binary set system generates the sigma algebra of Borel sets on the unit cube. There are many other variants of such binary set systems for higher dimensional cubes (e.g. permuting the orders of the dimensions) and scale $n$ approximations can be defined. 

The product formula representation theorem for these dyadic systems can also be used to explicitly and also randomly construct Borel measures.

\subsubsection{The product formula representation of a Dirac measure on the unit interval}

The Dirac measure $\delta_x$ on $\left[0,1\right]$ with unit mass at $x$ is also measure on the Borel sigma algebra. We obtain its product formula representation from its definition on set of dyadic intervals $\mathcal{D}$ which generate the Borel sigma algebra. 
 For a dyadic interval
$I = I\left(n,i\right)$,
\begin{eqnarray}
\delta_x\left(I\right) & = & 1 \text{ if } x \in I \\
\delta_x\left(I\right) & = & 0 \text{ if } x \in I^c = [0,1] - I
\end{eqnarray}
In other words, the Dirac measure is just the simplest counting measure, for the set consisting of one element $x$. However, it has an infinite product formula represention. For simplicity, we will assume that $x$ is not 1. The product coefficients for all of the dyadic intervals which do not contain $x$ are zero, so their factor in the product formula is $1$. The product coefficients for the other dyadic intervals are either $1$ or $-1$, depending on whether $x$ is in the left or right half of the interval. Let $\mathscr{J}_x$ denote the infinite set of dyadic
intervals containing $x$, so
\begin{equation}
\mathscr{J}_x = \left\lbrace I\left(n,i\right) = \left[i2^{-n},\left(i+1\right)2^{-n}\right),i=floor\left(2^nx\right), n=0,1,\ldots \right\rbrace
\end{equation}
For $I = I\left(n,i\right) \in \mathscr{J}_x$, we claim that the product coefficient has the formula  $a_I = \left(\left(-1\right)^{floor\left(2^{n+1}x\right)}\right)$. 
This is because $a_I = 1$ if $x$ is in the left half of $I$ and $a_I = -1$ if $x$ is in the right half of $I$. Thus by Lemma 2.1 the product formula representation for $\delta_x$ is
\begin{equation}
\delta_x = \prod_{n=0}^{\infty}\prod_{i=0}^{2^n-1}(1+a_{I\left(n,i\right)}h_{I\left(n,i\right)})dx
\end{equation}
where $a_I=\left(-1\right)^{floor\left(2^{n+1}x\right)}$ for $I=I\left(n,i\right) \in \mathscr{J}_x$ and
$a_I = 0$ for $I\not\in J_x$.

An analogous product formula holds for Dirac measures on higher dimensional unit cubes. 

\subsubsection{Product formula representations for the weighted sum of Dirac measures }
A weighted sum of Dirac measures on the unit interval is just a window in a time series. The full product formula for the sum can be computed top-down from the formula for product coefficients and the fact that the measure of a dyadic interval in this case is just the weighted sum of the point masses in the interval (i.e. weighted counting measure). The full representation is again an infinite product, but it can be approximated to a scale $n$ by omitting the factors corresponding to dyadic intervals for higher scales. For real world data sets consisting of event occurrences or weighted event occurrences, scale $n$ approximate approximations of the weighted sum of the Dirac measures is a good representation to use. 

 However, for real world time series consisting of measurements at particular times, the following method may be a more useful representation. This representation of the series as a measure consists in choosing a scale $n$ for approximation and computing the average of the time series on intervals of scale $n+1$. Some of these intervals may be empty and then a missing data rule has to be used. The most naive missing data rule is to represent the measure of dyadic intervals containing no time series observations  as zero. (This would agree with the weighted sum of Dirac measures representation.) The measure of a dyadic interval of scale $n+1$ containing time series observations is defined to be the average of the observations in the interval multiplied by $2^{(-n-1)}$, the length of the dyadic interval. This measure is represented by a step function of height equal to the average of the observations in the interval. We applied this method for representing time series window observations as measures in the real world applications discussed in the next section. 

\subsection {Exploiting product coefficient representations in data analysis}
Measures representing data samples from dyadic sets may be inferred by representing or approximating the data sample as a measure and then computing its product coefficients.  As discussed above the data samples may be pre-processed into weighted sums of Dirac measures (i.e. weighted counting measures), step functions on dyadic intervals of some scale, or possibly other types of measures.  (Note that different methods for pre-processing the data into a measure will likely result in different measures representing the data. )  Furthermore, given a set of $n$ data samples from a dyadic set $X$, each of which is represented by  product coefficients  $\mathscr{PC}^i=\{a_S^i:S \in \mathcal{B}\}, i=1,\ldots n$ and taking the point of view that these are $n$ samples of a single unknown measure $\mu$
the  product coefficients $\mathscr{PC}=\{a_S:S \in \mathcal{B}\}$  for $\mu$ can be approximately inferred simply by averaging.

\begin{equation}
a_S=\frac{1}{n}(\sum_{i=1..n} a_S^i )
\end{equation}
Define $\mu(X)$ to be the average of the sample volumes of $X$.

This simple rule can be used because the product coefficients are in [-1,1] so their average is also in this interval and hence determines a measure. The product formula model for the approximation of the measure is:
\begin{equation}
\mu(X) = \prod_{S \in \mathscr{S}}(1+ a_s  h_s)  dy^\mathscr{S}
\end{equation}
The error in this approximation depends on the dyadic sampling strategy. 
 
The measures representing the data samples and the average measure can then be analyzed by analyzing their vectors of product coefficient parameters. Since the product coefficient parameters are canonically normalized to be in the interval $[-1,1]$,  general  data analysis  methods can be used to compare the product coefficient representations of different data sets. 

In the remainder of this section we will illustrate four different types of data analyses performed on real world data sets using the product coefficient representation of the measures representing the data. In the first case study in section \ref{sec:Wind},  four days of wind time series are visually compared using a daywheel visualization; the differences revealed by the visualization are explained by trends at two large scales which distinguish one pair of days from the other pair of days and which are quantified by the product coefficients. The wind application illustrates the utility of product coefficient representations for exploratory unsupervised data analysis. The second case study in section \ref {sec:IP}  illustrate how product coefficient representations  can be used to distinguish and classify measures representing the data sources. This case study uses a set of twelve dimensional network time series data captured from a single port of an Internet protocol switch over a two week period. The product coefficient representations are computed for daily windows. The potential feasibility of using the product coefficient representations to distinguish and classify the sources is established by visually comparing six rectangular visualizations of the  product coefficient representations of the twelve average measures for each source. Quantitative results characterizing how well the product coefficients distinguish the different sources are obtained  by applying the support vector machine classification algorithm to the product coefficient vectors.  This illustrates the use of product coefficient parameters as canonical feature vectors for machine learning algorithms.  A third case study in section  \ref{sec:LiDAR} illustrates visualization by  pseudo-welding curves, an easily computable curve visualization technique, for distinguishing measures representing three dimensional images. The images were obtained by remote sensing using LiDAR technology (Light Detection and Ranging Technology). 

\subsubsection{Wind example: color displays of product coefficients} \label{sec:Wind}
Product coefficients for measures representing four days of wind speed data from NREL (the National Renewable Energy Laboratory) for a single year, a single location (in New
Jersey) and a single elevation were computed for scales 0 to 5. Figure \ref{fig:wind} graphs the time series of wind data for the four days. Figure \ref{fig:windwhitney} shows the four daywheel visualizations of the product coefficients for each of the four daily time series for scales $0$ through $5$. This visualization color codes the product coefficients using the jet convention: product coefficients with values -1, 0, and 1 are color coded as red, green and blue respectively and the other values are coded via interpolation using the jet convention. Scale $0$ coefficients are shown
in the center and the surrounding annuli show product coefficients for scales 1 through 5. Beginning at the upper left the four day wheels visualize the product coefficients for wind series time series for December 23,
January 16, March 1, September 27, respectively. The January 16 and December 23  wind patterns have relatively little variation, so product coefficients are small and appear as various shades of green. The March 1 and September 27  wind patterns are distinguished by having more wind in the first part of the day 
so the scale 0 coefficients are greater than $0$ and colored as blue. These two days also have more wind in the first quarter of the day than in the second quarter of the day so the first scale 1 coefficient is greater than $0$  and shown in blue. However, these two days have  somewhat less wind in the third quarter of the day than in the fourth quarter of the day so the second
scale 1 coefficient is less than $0$ and shown as yellow and gold. The daywheel visualizations reveal that only these high-level trends quantified by the first few product coefficients are required to distinguish the shapes of the wind graphs for the four days shown in \ref{fig:wind}.

\begin{figure}[htb]
\begin{center}
\includegraphics{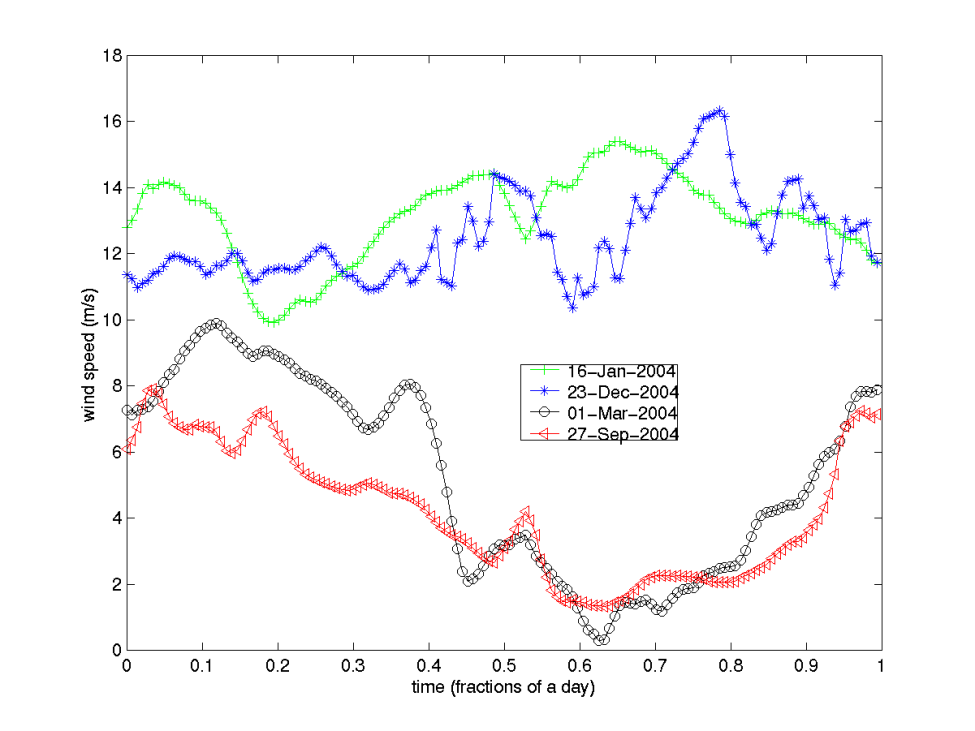}
\caption{Time Series for 4 Days of Wind Speed Data}
\label{fig:wind}
\end{center}
\end{figure}

\begin{figure}
    \centering
    \begin{subfigure}[b]{0.3\textwidth}
        \includegraphics[width=\textwidth]{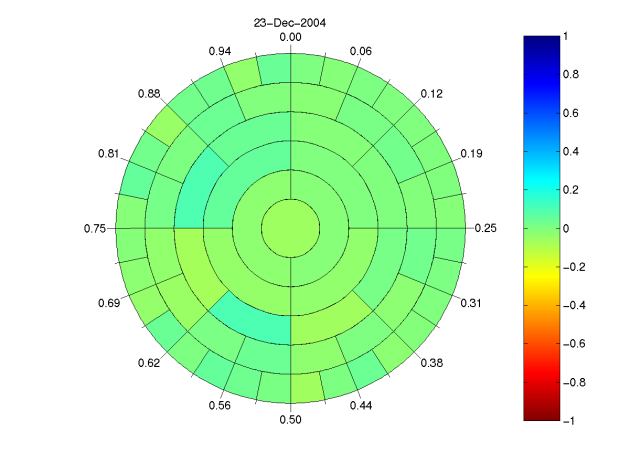}
        \caption{December 23}
    \end{subfigure}
    \quad
    \begin{subfigure}[b]{0.3\textwidth}
        \includegraphics[width=\textwidth]{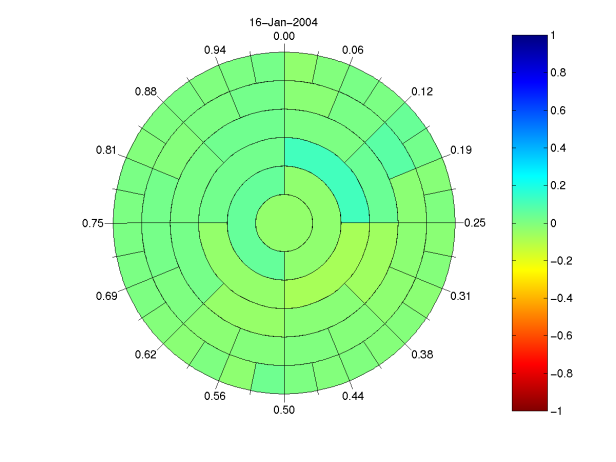}
        \caption{January 16}
    \end{subfigure}

    \begin{subfigure}[b]{0.3\textwidth}
        \includegraphics[width=\textwidth]{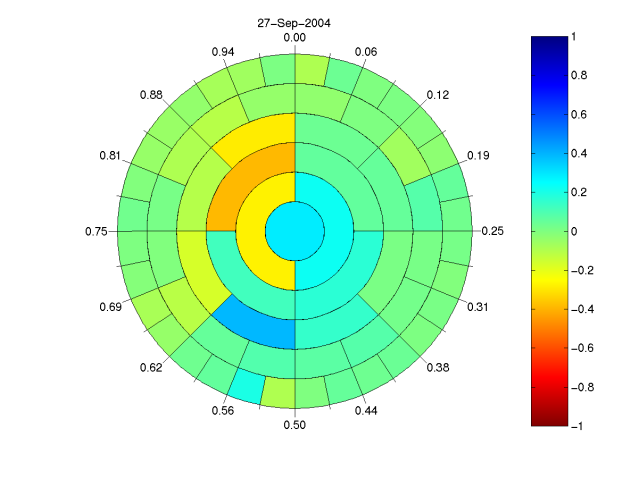}
        \caption{March 1}
    \end{subfigure}
    \quad
    \begin{subfigure}[b]{0.3\textwidth}
        \includegraphics[width=\textwidth]{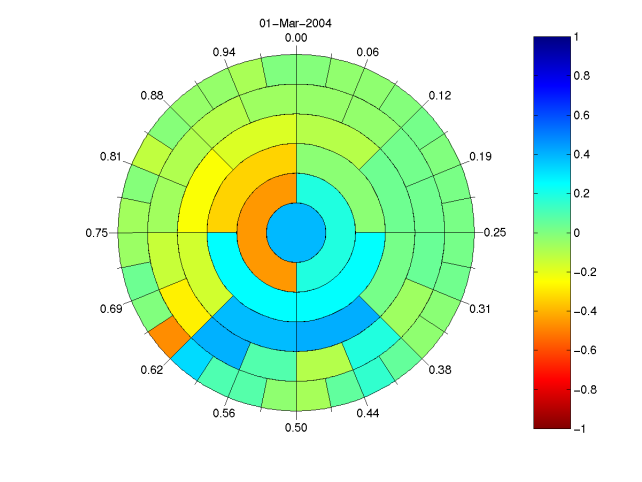}
        \caption{September 27}
    \end{subfigure}
    \caption{Day Wheel Visualization of the Wind Speed Time Series}
    \label{fig:windwhitney}
\end{figure}

\subsubsection{IP data example:  distinguishing and classifying data sources using product coefficients} \label{sec:IP}

In supervised machine learning the goal is to algorithmically define a classification function using samples of data drawn from a set of unknown probability distributions on the same universe. The classification functions are typically defined on a finite set of features generated from the raw data set using domain knowledge. The measures determined by the feature vectors are typically not characterized. Since the Dyadic Product Representation Lemma applies to all measures on a dyadic set, it is in principle possible to characterize the measures determined by a set of feature vectors by computing the product coefficients (to a scale appropriate for the data set) and then use the product coefficients as feature inputs to a classification algorithm. This provides a method for automatically computing a rich set of features sufficient to characterize the measures represented by the samples (to the scale selected).  Another challenge is classification using data from multiple sources (i.e. multiple universes). If each source universe has a dyadic structure, data samples from different universes can each be represented as vectors of product coefficients, whose values all are in the interval $\left[0,1\right]$. These product coefficient vectors can be fused by concatenation. 

We demonstrated this approach on a data set consisting of  internet protocol \cite{IP} (IP) traffic samples corresponding to port 22 (SSH/SCP service \cite{SSH}). The question was:  would it be possible to construct profiles of  the daily traffic  which would enable identification of the IP v4 address.  In the pre-processing step, twelve raw features signals were computed for each IP address for each day: packets inbound (local), packets outbound (local), bytes inbound (local), bytes outbound (local), degree inbound (local), degree outbound (local), packets inbound (local), packets outbound (remote), bytes inbound (remote), bytes outbound (remote), degree inbound (remote), degree outbound (remote). Product coefficients were computed for each of the raw signals for scales 0, 1, and 2 so that the finest time interval was three hours. They formed an 84-dimensional feature vector per IP per day that represented the daily harmonics for the SSH/SCP service. 

The top six IPv4 addresses in terms of number of days active were selected. Each of these happened to be from different usage groups identified by the IT staff (but not quantitatively characterized). There were approximately 145 feature vectors for each of the top six IP addresses. 

We first explored the potential feasibility of distinguishing these top six IP addresses by the product coefficient representations of the measures representing them by visualizing the twelve average measures representing each of the six IP addresses (since there were twelve time series for each IP address). The results are shown in Figure \ref{fig:IPAddresses}. The averages were computed over the set of windows for each IP address. The twelve average measures for each IP address were visualized as a color-coded rectangular matrix with twelve rows and seven columns shown in Figure \ref{fig:IPAddresses}. Each row corresponds to a different type of time series. The columns show the color-coding of the seven product coefficients. The first column is the scale 0 product coefficient; the second and third columns show the scale one product coefficients and the third through seventh columns show the scale two product coefficients.  From Figure \ref{fig:IPAddresses} we see that the averages are distinguishable, although it is simpler to distinguish some of the averages than others. For example, the product coefficients for the average measures for IP Address 1055 and IP Address 2616 are very easily distinguished from the others.

\begin{figure}
    \centering
    \begin{subfigure}[b]{0.3\textwidth}
        \includegraphics[width=\textwidth]{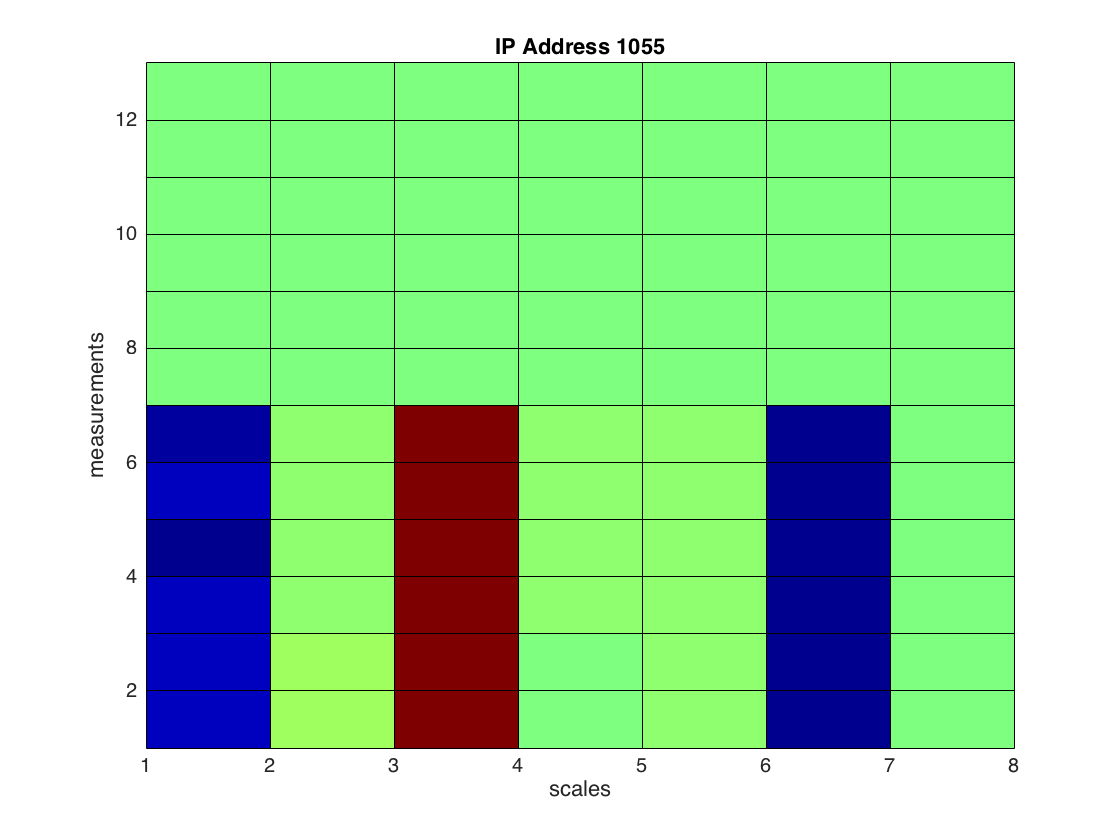}
        \caption{IP Address 1055}
    \end{subfigure}
    \quad
    \begin{subfigure}[b]{0.3\textwidth}
        \includegraphics[width=\textwidth]{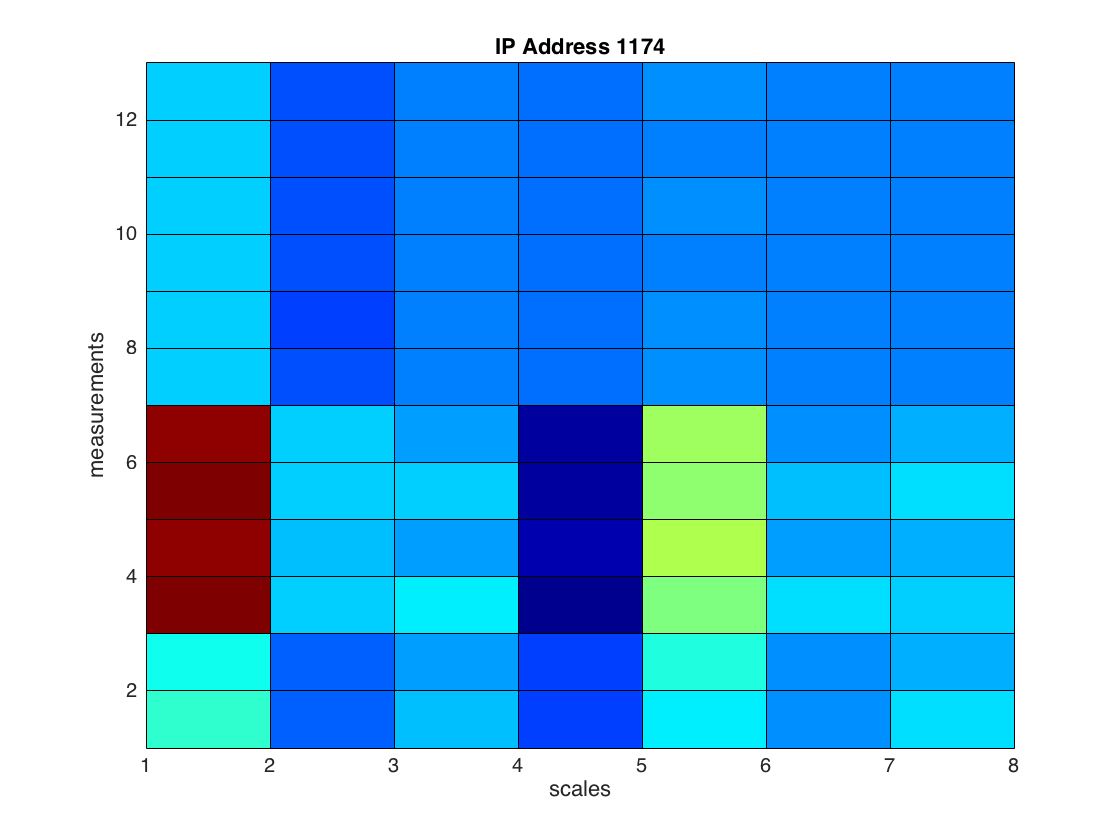}
        \caption{IP Address 1174}
    \end{subfigure}
    \quad
    \begin{subfigure}[b]{0.3\textwidth}
        \includegraphics[width=\textwidth]{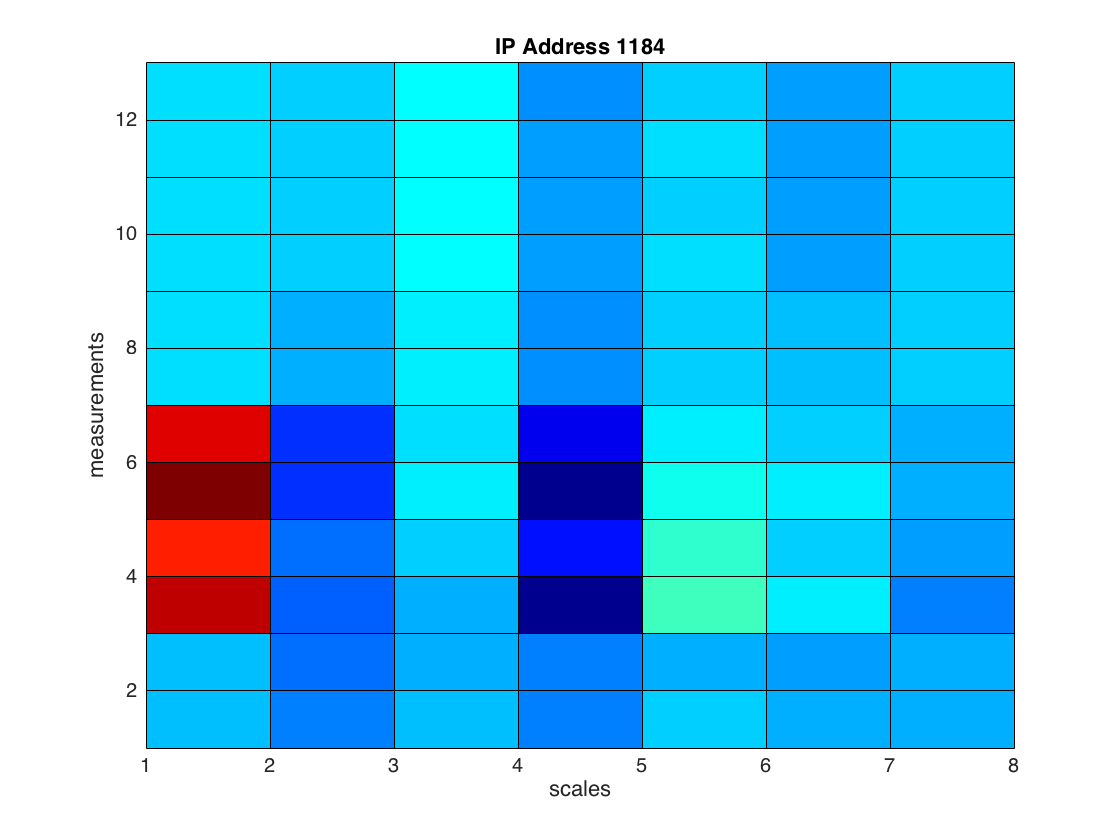}
        \caption{IP Address 1184}
    \end{subfigure}

    \begin{subfigure}[b]{0.3\textwidth}
        \includegraphics[width=\textwidth]{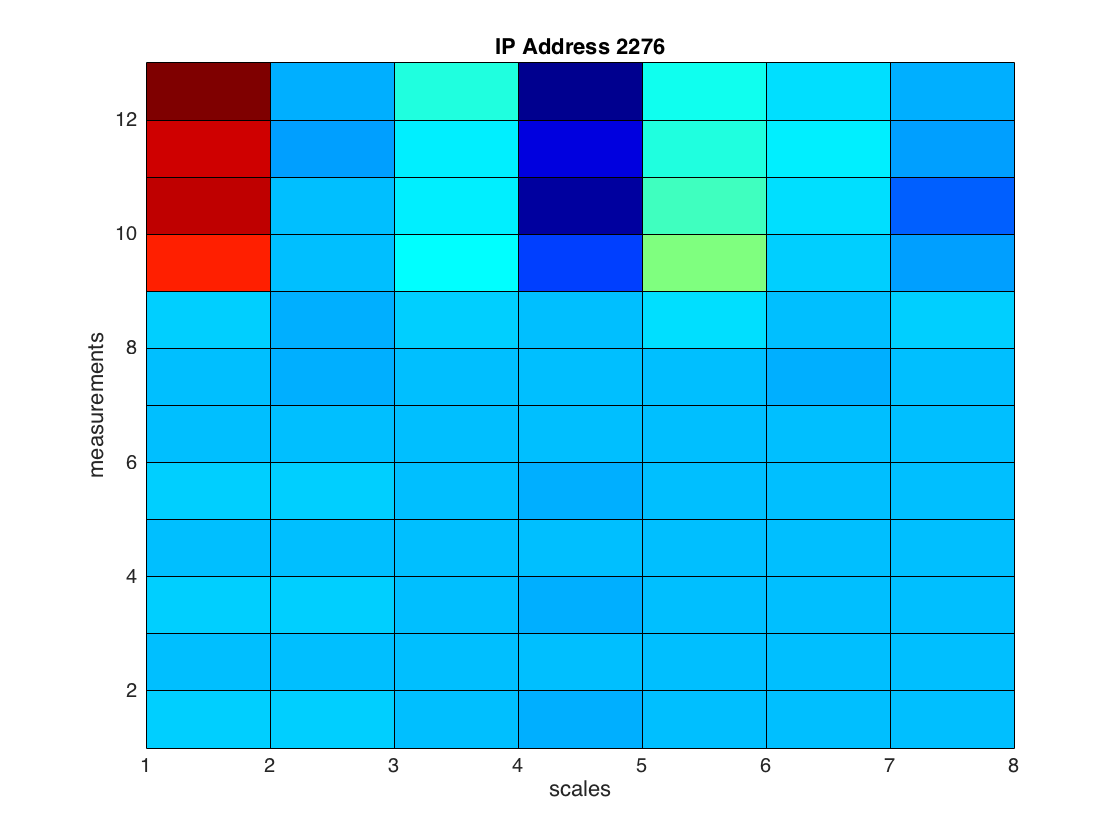}
        \caption{IPAddress2276}
    \end{subfigure}
    \quad
    \begin{subfigure}[b]{0.3\textwidth}
        \includegraphics[width=\textwidth]{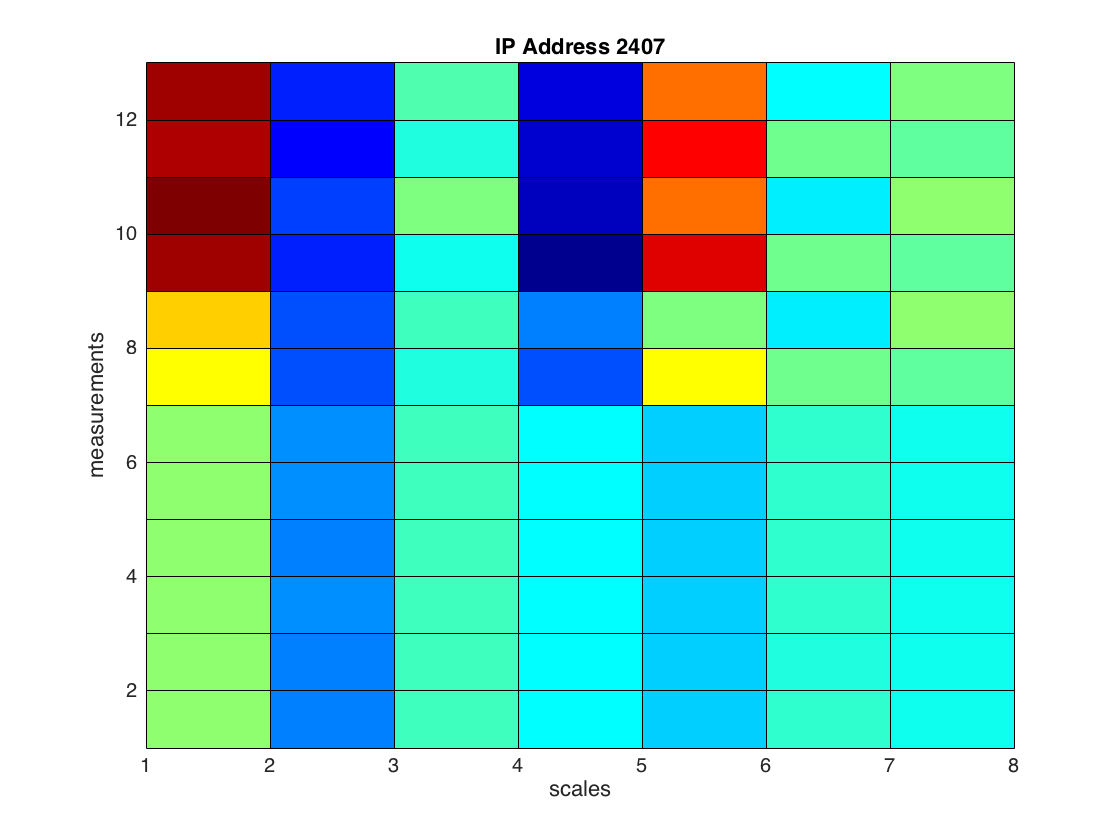}
        \caption{IPAddress2407}
    \end{subfigure}
    \quad
    \begin{subfigure}[b]{0.3\textwidth}
        \includegraphics[width=\textwidth]{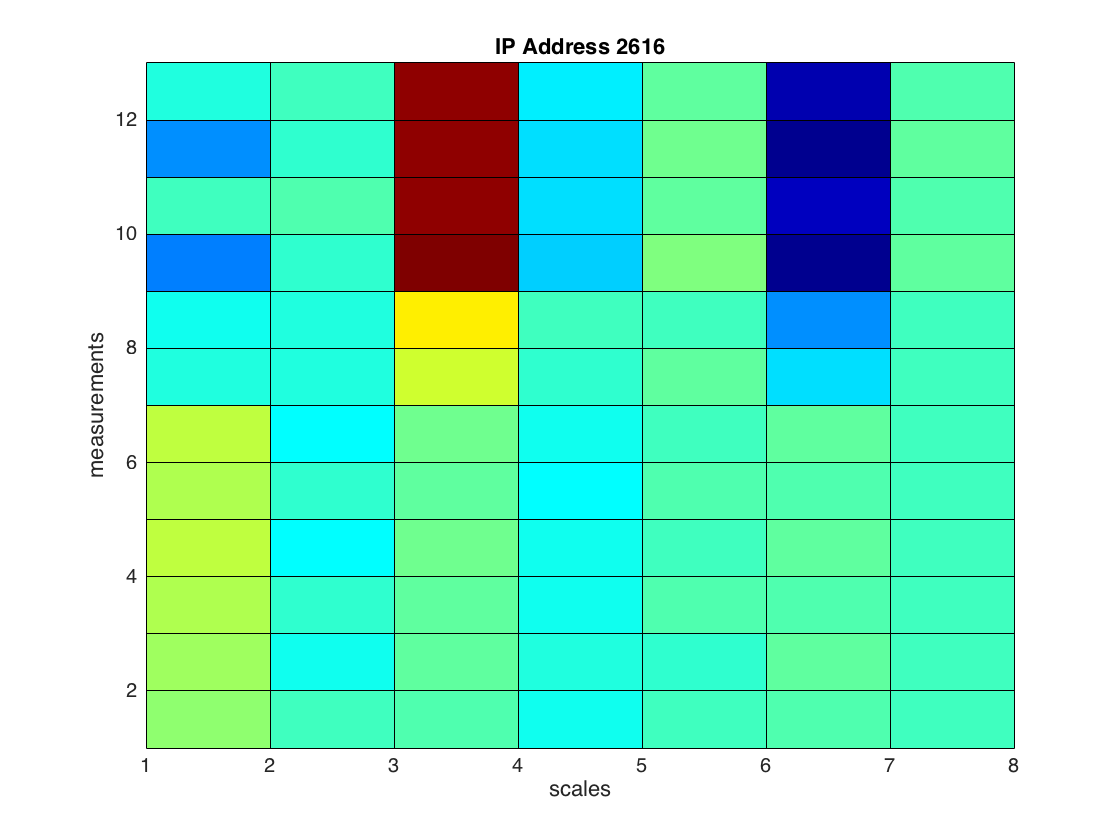}
        \caption{IP Address 2616}
    \end{subfigure}
    \caption{Product Coefficients for Average Measures for IP Address Sources}
    \label{fig:IPAddresses}

\end{figure}

The Support Vector Machine algorithm was used to compute six binary classification functions: one IP address against all of the other IP addresses.\footnote{Rauf Izmailov at Applied Communication Sciences performed the SVM classification} The performance for the classification rules was measured in terms of error rate (the probability that the classification is incorrect), sensitivity (the probability that the vector of the targeted class is correctly identified), and specificity (the probability that a vector not belonging to the targeted class is correctly identified). The performance metrics were computed over 10 runs. In each run the data was randomly split into two parts: a training dataset comprised of a randomly selected 75$\%$ of the data and the test data set consisting of the remaining 25$\%$ of the data. A classification function was computed on the training set and evaluated on the test set. The average results over 10 runs for Support Vector Machine classification using the radial basis kernel are presented in Table \ref{ip.rbf.table}:

\begin{table}
\begin{center}
\begin{tabular}{ |p{2cm}||p{2cm}|p{2cm}|p{2cm}|  }
 \hline
 \multicolumn{4}{|c|}{Averages of Radial Basis SVM Performance Statistics in Percentages} \\
 \hline
 IPv4 Address ID & Sensitivity & Specificity & Error\\
 \hline
 ID 1174   & 93.99    & 63.68 & 11.01   \\
 ID 2407 &   97.53 & 75.48 & 6.17\\
 ID 1184 & 95.16 &  64.21 & 10.23\\
 ID 2616 & 97.57 & 84.69 &  4.79 \\
 ID 1055 &  99.23  & 99.07 & 0.78\\
 ID 2276 & 97.24 & 85.14 & 4.75\\
 \hline
\end{tabular}
\end{center}
\caption{Radial basis function kernel SVM results}
\label{ip.rbf.table}
\end{table}

Thus the results implied that for the 6 most active IPv4 addresses it would be practical to approximately infer the IP address IP IDs from the product coefficient representation of the daily activity profile measures. Performance metrics for the linear kernel were significantly worse: e.g. an error rate of 30$\%$+ for IP IDS 1184 and 2276. 

For instance, the best classification result (with its error rate equal to 0.78\% for RBF SVM)  was obtained for ``IP ID 1055 vs all others''. 
The next best classification result (with its error rate equal to 4.79\% for RBF SVM)  was obtained for ``IP ID 2616 vs all others''.  These are the two IP Addresses whose average measures are most easily distinguish in \ref{fig:IPAddresses}
On the opposite end of the classification error rate, as shown in the previous section,  the worst classification results (with its error rates equal 10.32\%  and 11.01\% for RBF SVM) were obtained for  'IP ID 1174 vs all others' and 'IP ID 1184 vs all others''. This is also consistent with the average measure visualizations in \ref{fig:IPAddresses}, where  the average measures for IP Address 1184 and IP Address 1174 look very similar.  

Recently, the IP network data was analyzed again using a dyadic tree structured classification algorithm on the set of product coefficients \cite{Ness16}. 
\section{Measure Visualization} \label{sec:visualization}
\subsection {Background on welding curves }
As we will show in this section, measures on sigma algebras generated by binary set systems may be represented (and hence
visualized) by Jordan plane curves if the product coefficient parameters satisfy mild restrictions. Jordan curves are simple closed curves in the plane. We will characterize the uniqueness of these representations. This visualization is guaranteed by several deep mathematical theorems in quasi-conformal mapping theory due to Beurling and Ahlfors \cite{BeurlingAhlfors56} and Ahlfors \cite{Ah66}.

Let  $\mathcal{D}$ denote the binary set system on $[0,1]$ consisting of the half open interval dyadic intervals
$I\left(n,i\right) = \left[i2^{-n},\left(i+1\right)2^{-n}\right)$ for $i=0,1,\ldots,2^n-2$ and the
closed dyadic intervals $I\left(n,i\right) = \left[i2^{-n},\left(i+1\right)2^{-n}\right]$ for $i=2^n-1$. $\mathcal{D}$ can be viewed as a dyadic set system for the unit circle $S^1$ (with zero mapped to 1).

A measure $\mu$ on $\Sigma\left(X,\mathscr{S}\right)$ the sigma algebra of a binary set system on $X$, uniquely determines a measure $\mu_{S^1}$ on
$\Sigma\left(S^1,\tilde{\mathscr{D}}\right)$ (and vice versa). (The measures have the same product
coefficients.) We propose to visualize $\mu$ by visualizing the measure $\mu_{S^1}$. (The previous wind
examples shows that measures on sigma algebras generated by binary set systems can alternatively be
visualized using daywheel figures.)

The connection between Jordan curves in the plane and measures is made via the welding map. The welding map for a Jordan curve $\Gamma$ in the plane is constructed as follows: let $F_+$ be a choice of conformal map from
the unit disk to interior of $\Gamma$, and let $F_-$ be a choice of conformal map from the outside of the unit disc 
$\left\lbrace |Z|>1\right\rbrace$ to the domain exterior to $\Gamma$.  Define $\Phi=F_-^{-1}\circ F_+$. Then $\Phi: S^1 \rightarrow S^1$ is
a homeomorphism of the unit circle to itself, and $\Phi$ is called the \textbf{welding map} for $\Gamma$.  The Jordan curve $\Gamma$ is a welding curve for $\Phi$. 
Because $\Phi$ is a homeomorphism its derivative $\Phi'$ is a positive measure $\mu$ on the unit circle $S^1$, which has positive measure on all intervals of positive length. In fact, $\Phi'$ is a finite measure
if and only if $\Phi$ has bounded variation. The von Koch snowflake curve is an example of a map of the unit circle whose derivative is not only a singular Lebesgue measure, but in fact has support on a set of Hausdorff dimension less than one. Okiwa showed the existence of examples of homeomorphisms of the unit circle which are not welding maps \cite{Oi61}. Okiwa proved that  if the derivative of a homeomorphism of the unit circle scales like two different powers of $\theta$ on adjacent intervals of the unit circle it is not a welding map.

The measure determined by the derivative of the welding map $\Phi$ encodes the geometry of the welding curve $\Gamma$. For example, if close to some point $z_0$ on $\Gamma$, the curve looks like two intervals having an interior angle of $\theta$ at $w_0$, then there is a point $z_0$ on the circle such that
\begin{equation}
\Phi' \sim |z-z_0|^{\frac{2\theta-2\pi}{2\pi-\theta}} \text{ near } z_0
\end{equation}
The converse also holds. This type of power singularity for $\Phi'$ is also reflected in its coefficients $a_I$.
See \cite{Oi61} for an early paper with basic properties of welding. Recent work provides new, probabilistic classes of welding maps that arise in Conformal Field Theory.

To construct a Jordan curve from a positive measure on the unit circle we exploit results from quasi-conformal mapping theory to solve the welding problem which inverts the process above; i.e. 
given a homeomorphism  $\Phi: S^1 \rightarrow S^1$   find a Jordan curve $\Gamma$ and conformal mappings $F_+$ and $F_-$  onto the complementary domains so that $\Phi=F_-^{-1}\circ F_+$.

\subsection {Measure visualization theorem} \label{sec:visualizationtheorem}

\begin{thm}[Measure Visualization]
For a measure $\mu$ on $\Sigma\left(X,\mathscr{S}\right)$, the sigma algebra of a binary set system on $X$,
let $\mu_{S^1}$ denote the measure on $\Sigma\left(S^1,\tilde{\mathscr{D}}\right)$ with the same product
coefficients.
\begin{enumerate}
\item \label{clm:viz1} If $\mu_{S^1}$ is a positive measure represented by a finite product formula and none of its product coefficients are $\pm 1$, then $\mu_{S^1}$ is the derivative of a welding map $\Phi:S^1\rightarrow S^1$
determined by a Jordan curve $\Gamma$, denoted $\Gamma_{\mu_{S^1}}$, unique up to M\"obius transformations.
\item \label{clm:viz2} If $\mu_{S^1}$ is  represented by an infinite product formula, and if the product coefficients for both 
$\mu_{S^1}$ and $\mu_{S^1} \circ \left( \text{rotation by } \frac{\pi}{3}\right)$ are strictly bounded away from
$\pm 1$, (i.e., if there exists $\epsilon > 0$ such that all product coefficients satisfy $|a_I| \leq 1 - \epsilon$), then $\mu_{S^1}$ is the derivative of a welding map $\Phi:S^1\rightarrow S^1$ determined by a
Jordan curve $\Gamma$, denoted $\Gamma_{\mu_{S^1}}$, unique up to M\"obius transformations.
\end{enumerate}
\end{thm}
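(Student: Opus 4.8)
The plan is to reduce both statements to the single fact recalled in the previous subsection --- that a quasi-symmetric measure on $S^1$ is the derivative of a welding map of a Jordan curve, unique up to M\"obius transformations --- so that the only genuine work is to check that each hypothesis forces quasi-symmetry. Once that is in hand, the Beurling--Ahlfors extension theorem \cite{BeurlingAhlfors56} builds the curve and Ahlfors' uniqueness theorem \cite{Ah66} supplies the M\"obius ambiguity.

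\textbf{Step 1: the hypotheses force quasi-symmetry.} For a dyadic interval $I$ the product formula gives $\mu_{S^1}(L(I))/\mu_{S^1}(R(I)) = (1+a_I)/(1-a_I)$, which lies in a fixed compact subinterval of $(0,\infty)$ exactly when $|a_I|$ is bounded away from $1$; this bounds the quasi-symmetry ratio on the standard dyadic grid $\mathcal{D}$. For the first statement the product formula is finite, so there are finitely many non-leaf intervals and $\max_I |a_I| < 1$; in that case $\mu_{S^1}$ has a positive step-function density $w$, bounded above and below, and since the two halves of any interval $I\subset S^1$ have equal length, $\mu_{S^1}(I_L)/\mu_{S^1}(I_R)$ is pinched between $(\min w)/(\max w)$ and $(\max w)/(\min w)$, so $\mu_{S^1}$ is quasi-symmetric. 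For the second statement the product formula may be infinite, and controlling $\mathcal{D}$ alone does not suffice, because an interval straddling a dyadic breakpoint is comparable to no single interval of $\mathcal{D}$. This is exactly what the auxiliary rotated grid repairs: by the one-third trick, every interval of $S^1$ is comparable to some interval of $\mathcal{D}$ or of $\mathcal{D}$ rotated by $2\pi/3$, so the stated hypothesis --- a common $\epsilon$ with $|a_I|\le 1-\epsilon$ for the product coefficients of both $\mu_{S^1}$ and $\mu_{S^1}$ composed with rotation by $2\pi/3$ --- is precisely the condition noted just before the theorem that yields quasi-symmetry of $\mu_{S^1}$.

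\textbf{Step 2: build the curve.} Write $C$ for the quasi-symmetry constant. Then $\mu_{S^1}=\Phi'$ for a homeomorphism $\Phi\colon S^1\to S^1$, and Beurling--Ahlfors extends $\Phi$ to a quasi-conformal self-map $G$ of the disc $D$ whose Beltrami coefficient $\nu$ is supported in $D$ with $\|\nu\|_\infty\le 1-\epsilon(C)<1$. Extend $\nu$ by $0$ off $D$ and solve $\bar\partial F=\nu\,\partial F$ on the whole plane via the measurable Riemann mapping theorem, normalizing $F$ to fix $\infty$; then $F\colon\mathbb{C}\to\mathbb{C}$ is a homeomorphism, conformal on $\mathbb{C}\setminus\overline{D}$. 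Thus $\Gamma:=F(S^1)$ is a Jordan curve, $F|_{\mathbb{C}\setminus\overline{D}}$ is a conformal map onto the exterior of $\Gamma$, and $F|_D\circ G^{-1}$ --- conformal, since $F|_D$ and $G$ share the Beltrami coefficient $\nu$ --- is a conformal map of $D$ onto the interior of $\Gamma$. Tracking these two conformal maps on the circle shows the welding map of $\Gamma$ is $\Phi$ (after fixing the choice of interior versus exterior and a rotation, neither of which alters the derivative), so $\mu_{S^1}$ is the derivative of the welding map of $\Gamma$, which we take to be $\Gamma_{\mu_{S^1}}$.

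\textbf{Step 3: uniqueness.} If $\widehat{\Gamma}$ is any Jordan curve whose welding map has derivative $\mu_{S^1}$, Ahlfors' theorem gives $\widehat{\Gamma}=M(\Gamma)$ for a M\"obius transformation $M$, and conversely every M\"obius image of $\Gamma$ has a welding map with derivative $\mu_{S^1}$; hence $\Gamma_{\mu_{S^1}}$ is determined up to M\"obius transformations, which proves both parts. I expect the main obstacle to sit in Step 1 for the infinite case: making the one-third-trick comparison quantitative enough that the two rotated-grid bounds assemble into the single uniform quasi-symmetry constant that Beurling--Ahlfors actually consumes. The bookkeeping in Step 2 --- verifying that the constructed $F$ welds to $\Phi$ and not to some unrelated homeomorphism of the circle --- is the other place demanding care, though it needs no new idea.
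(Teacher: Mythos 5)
Your proposal is correct and follows essentially the same route as the paper, which itself offers only ``proof notes'' deferring to the Beurling--Ahlfors extension theorem and the uniqueness results in Ahlfors' book, with the reduction to quasi-symmetry (including the role of the $2\pi/3$-rotated grid for the infinite case) already asserted in the discussion preceding the theorem. Your write-up actually supplies more detail than the paper does --- in particular the explicit construction of $\Gamma$ via the measurable Riemann mapping theorem and the honest flagging of the quantitative one-third-trick step, which the paper likewise states without proof.
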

\begin{proof}
We begin with a remark.  In the proof we identify measures on $S^1$ with measures on the unit interval, identifying 0 and 1, using the mapping $x \mapsto exp(2 \pi ix)$ . The measure on the unit interval can be extended to other intervals as needed by translation by integers.Then all intervals can be viewed as mappings of $I = [x,y]$ where $y > x$ .

The first step is to prove that if a measure $\mu_{S^1}$  satisfies the conditions in parts one and two of the theorem it  satisfies the quasi-symmetric condition, i.e.  for all intervals $I \subset S^1$. 
\begin{equation}
\frac {1}{C}  \leq \frac{\mu(I_L)}{\mu(I_R)} \leq C
\end{equation}
where $C$ is  a positive constant independent of $I$, and where $I_L$ and $I_R$ denote the left and right halves of the interval $I$  If the measure has a finite product formula and none of its finitely many product coefficients are equal to $\pm 1$, the measure is a positive multiple of Lebesgue measure on each of the dyadic intervals at the finest non-leaf scale $n$. Let $m$ and $M$ denote the minimum and maximum of the positive multiples. 
\begin{equation}
\frac{m}{M} \leq \frac{\mu(I_L)}{\mu(I_R)} \leq \frac{M}{m}
\end{equation}
In fact, this quasi-symmetric bound can be expressed in terms of the bounds on the product coefficients. If the measure has a finite product formula and none of its finitely many product coefficient are equal to $\pm 1$, 
then $|a_S| <  1- \epsilon$  for some positive 
$\epsilon$ so $\epsilon < 1 \pm a_S < 2 - \epsilon $. By the Dyadic Product Formula Representation 
(Lemma \ref {representationlemma}) the measure of each dyadic intervals  $I_n$  at scale (depth) $n$  is 

\begin{equation}
\mu(I_n) = \mu(X) \frac{\prod_0^{n - 1} (1 \pm a_{I_i}) }{2^n} 
\end{equation} 
where the intervals $I_i$ are dyadic intervals at depth $i$ containing $I_n$.  
Thus $ \mu(X) 2^{-n} \epsilon^n \leq m$  and  $ M \leq \mu(X) 2^{-n} (2+ \epsilon) ^n$  so
\begin{equation} 
\frac {M}{m} \leq (\frac{2+ \epsilon}{\epsilon})^n
\end{equation}

Thus for finite measures the quasi-symmetric condition on a measure  is equivalent to the condition that the absolute values of the product coefficients are strictly bounded away from 1.  
Note that the bound above is dependent on the maximum scale $n$ so this argument does not apply to measures with infinitely many product coefficients so a different proof is required to prove the condition $\textit{2} $ implies that the measure is quasi-symmetric

To prove that the hypotheses of part $\textit{2}$ imply quasi-symmetry, we will exploit the $1/3 \: trick$ \cite{GrBeJo} (Section 3 and Appendix C) and then invoke the previous result. Given an interval $I = [x, y] , x \in [0,1], y \in [0,1]$,  let $n$ denote the depth (scale) of the smallest dyadic interval $I_n$  containing both $x$ and $y$ and let $n' $ denote the depth (scale) of the smallest dyadic interval $I_{n'}$ containing both $x' = x + 1/3$ and $y' = y + 1/3$. then by the $1/3 \: trick$ \cite{GrBeJo} (Section 3 and Appendix C)
\begin{equation}
|x - y| = |x' - y'| \geq \frac {2^{-max \{n,n' \}}}{6}
\end{equation} 
This implies that a lower bound on distance $d = |x-m| = |m-y| = |x' - m'| = |m'-y'|$  to the midpoint $m$  is $ d \geq \frac{2^{-N}}{12}$. 

Let $I_N$ and $I$  denote the intervals corresponding to $max\{n,n'\}$. The dyadic subintervals of $I_N$ at scale $N + 5$  separate the points $x$, $m$, and $y$ and there is a least one interval in the left half L(I)  of $I = [x,y]$ between the intervals containing $m$ and $x$. The measure of each of these dyadic subintervals is computed via a path formula of length $N + 5$. The prefix of length $N$ is common to all of the intervals Thus the ratio $\frac{ \mu(R(I)}{\mu(L(I))} $is equal to the ratio for the finite  probability measure of scale five on the tree of descendants of the interval $I_N$. The hypotheses of part $ \textit(2)$ imply that that the bounds for product coefficients are the same for the original measure and the measure translated by $\frac{1}{3}$.   The proof of part $i$ applies so the same quasi-symmetric bound is
\begin{equation} 
(\frac{\epsilon}{2 + \epsilon})^5 \leq  \frac{ \mu(R(I)}{\mu(L(I))}    \leq (\frac{2+ \epsilon}{\epsilon})^5
\end{equation}

The second step of the proof is to show that conclusions of parts \textit{1 }and \textit{2}(which are the same) follow from quasi-symmetry. The conditions on the product coefficients of the measure  imply that the measure is positive, so the cumulative measure function $\Phi:S^1 \rightarrow S^1$ defined by $\Phi(t) = \mu([0,t))$ defines an (increasing) homeomorphism and $\Phi' = \mu$. Since $\Phi'$ is quasi-symmetric,  the Beurling-Ahlfors extension theorem \cite{BeurlingAhlfors56} implies that  $\Phi$  can be extended to be a quasi-conformal mapping $f: D \rightarrow D$ from the unit disc to itself which solves the 
Beltrami equation 
\begin{equation} \label{eq:beltrami}
\bar\partial f = \mu \partial f
\end {equation}
 where $\mu$ is defined to be identically zero,  $\mu \equiv 0$  off $D$,  and $||\mu||_{\infty} \leq 1 - \epsilon(C)$. 
 Given this, the reasoning in Section 2.1 of \cite{AsKuSaJo11} proves that $\phi$ is a welding map determined by a Jordan curve $\Gamma$, denoted $\Gamma_{\mu_{S^1}}$, unique up to M\"obius transformations.  
 The reasoning in Section 2.1 of \cite{AsKuSaJo11}  also shows that the welding curve is $F(S^1)$ where $F$ is a solution  of the same Beltrami equation \ref{eq:beltrami}, but on the domain $\mathbb{C} $ not just on the domain $D$ (the unit disc). 
 \end{proof}

\begin{proof}[Historical proof notes]
The quasi-symmetry condition is defined in Chapter 4 of Ahlfors' book ``Lectures on Quasi-Conformal  Mappings" \cite{Ah66}. There  the quasi-symmetry condition is called the "M-condition". It is a deep theorem of Beurling and Ahlfors that a quasi-symmetric measure on the unit circle gives rises to a quasi-conformal mapping of the plane to itself and the unit circle is mapped on to a quasi-circle. See  Chapter 5 of \cite{Ah66}.  There it was  proved by using $L^p$ estimates on the Beurling transform which transforms a function $f:\mathbb{C}\rightarrow \mathbb{C}$ by convolving it with the kernel $\frac{1}{\pi z^2}$. This transformation is a bounded operator on the function space $L^p$ for $1<p<\infty$. For $L^2$ the norm
of this operator is 1. 
\end{proof}

For some measures represented by an infinite product formula which does not satisfy the condition
\ref{clm:viz2} of the Measure Visualization theorem, there exist multiple non-equivalent welding maps \cite{Os03}. In fact,
if the Jordan curve in the plane has positive $2D$ Lebesgue measure (e.g., a Jordan curve which threads through
a Cantor set with positive 2 dimensional Lebesgue measure) then there exist an uncountable number of non-equivalent welding maps. This is discussed in \cite{Jo95}. A deep theorem is: given any closed set of $\mathbb{R}^2$ of positive measure (e.g., the one sphere $S^1$) there exists a quasi-conformal map $f:\mathbb{R}^2 \rightarrow \mathbb{R}^2$ that is not a M\"obius 
transformation but is holomorphic off a closed set and one-to-one on the closed set \cite{Ah66}. The closed set can be used to obtain non-unique welding maps.

The visualization theorem applies  to positive measures whose product coefficients are  strictly bounded away from 1 in absolute value. To visualize finite real-world measures some of whose product coefficients have absolute value 1, one can 
deform the product coefficients slightly to obtain a positive measure. The visualization of such a deformation 
should still reveal the binary sets with measure $0$, i.e. the disconnected geometry of the support of the measure.

The discussion preceding the theorem outlines a complex analytic method for computing a welding curve. 
An exposition of approaches for constructing welding curves  is  also given in Mumford and Sharon {\cite{Mumford06}. Mumford and Sharon{\cite{Mumford06} were studying $2D$ shape classes, which they defined to be equivalence classes of (infinitely) smooth Jordan curves, where curves were equivalent if they differed by translation and scaling. They proved that these shape classes are the same as the diffeomorphism classes of the welding maps for the smooth Jordan curves modulo the M\"obius transformations and then went on to study the Weil-Peterson metric on the diffeomorphism classes. They give a clear exposition of  the existence theory for welding maps and summarize computational methods for welding curves. The class of measures determined by the shape classes is much, much smaller than the class of measures identified in the Visualization Theorem. Most real world measures (including finite approximations to them)  do not determine infinitely smooth Jordan curves. Mumford and Sharon's view point is that all  shapes cannot be characterized by a fixed finite number of features. However, if an $\epsilon > 0$ is chosen, a representative infinitely smooth shape curve can be well approximated by a curve determined by a finite number of product coefficients, where the number of product coefficients depends on both the geometric properties of the curve and the smoothness. 

\subsection{Example: Pseudo-welding curves for LiDAR data counting measures} \label{sec:LiDAR}
We experimented with applying the product formula representation to a counting measure derived from a set of sample data \cite{BrLa12} collected using LiDAR technology (Light Detection and Ranging technology) \footnote{ The LiDAR light detection and ranging  technology \cite{Campbell2002} includes an active optical sensor that transmits laser beams towards a target while moving through specific survey routes. The reflection of the laser from the target is detected and analyzed by receivers in the LiDAR sensor. These receivers record the precise time from when the laser pulse leaving the system to when it returns to calculate the range distance between the sensor and the target, combined with the positional information GPS (Global Positioning System), and INS (inertial navigation system). These distance measurements are transformed to measurements of actual three-dimensional points in the reflective target in object space. An introductory explanation of LiDAR technology is also available in \cite{Medina2019}. There a LiDAR image of the Golden Gate Bridge is decomposed into components with the same intrinsic dimension. }.
This data consists of ten sets of discrete points in 3-dimensional space, representing the surfaces visible to the scanning laser rangefinder in ten nearby scenes.  Each point has been labelled as either ``vegetation'' or ``ground''.  For the most part the ground was wavy, but approximately horizontal, while the vegetation consisted of shrubs, with more vertical extent.
Previous work \cite{BaIzMcNeSh12} had examined this same data using a multiscale SVD approach to build a support vector machine (SVM) based classification rule that could, with high accuracy, reproduce the vegetation/ground labelling.  We experimented with using product coefficient parameters as features instead of multiscale SVD parameters.  The experiment showed that decision rules for distinguishing two measures (here ``vegetation'' and ``ground'') could be approximately inferred from  histograms of the product coefficients. While the metrics were not as good as for multiscale SVD, the method did provide a transparent rationale for the decision rule. 

For our analysis, we translated and scaled the data sets to fit them into the unit cube $[0,1]^3$, and to send their median $x$, $y$, and $z$ coordinates to the same location $(m_x,m_y,m_z)$ in the cube.  Each data set had its own translation vector, but a common set of three scaling factors was chosen.  The target median point and the scaling factors were chosen to make the scaling factors as large as possible.  We then applied the product form decomposition, subdividing the cube sequentially by dimension 1, 2, 3, 1, 2, 3, etc. to the measure given by point masses of equal weight at each of the data points.  While subdividing each dimension 10 times gives 230 coefficients, we only needed to calculate those coefficients that correspond to subdividing a cell containing at least one data point.

We constructed \textit{pseudo-welding curves} for measures determined by the LiDAR data,  each a continuous piecewise linear function on the interval $[0,1]$, using the tent construction in 
\cite{GrBeJo} (Section II-B). There it was  used to construct finite approximations of Brownian motion. The product coefficients were used as the linear shifts. Note that these pseudo-welding curves are functions constructed on the unit interval and are not Jordan curves, unlike welding curves. Since the function values agree on $0$ and $1$ they typically determine Jordan curves. Note also that this construction can be applied to all positive measures, not just strictly positive measures as required by welding curve visualizations. The knots of the curve at the $i^{th}$ scale are obtained by raising or lowering the midpoint of the linear segments of the curve by the value of the corresponding product coefficient weighted by a factor of $2^{-s}$.  The corners, or knots, of the piecewise linear curves correspond to particular coefficients, and so to the corresponding subsets of the unit cube on which these coefficients represent divisions.  We have colored these knots according to the classifications of the LiDAR points contained in the subsets.  Red knots correspond to subsets containing only ground points, green knots correspond to subsets containing only vegetation points, blue knots correspond to subsets containing both ground and vegetation points, and black knots correspond to subsets containing no LiDAR points.  (Subsets containing no LiDAR points produce coefficients with value zero.)  We can see that the pseudo-welding curves for the different samples have similar shapes.

\begin{figure} [!h]
\includegraphics [width = \textwidth] {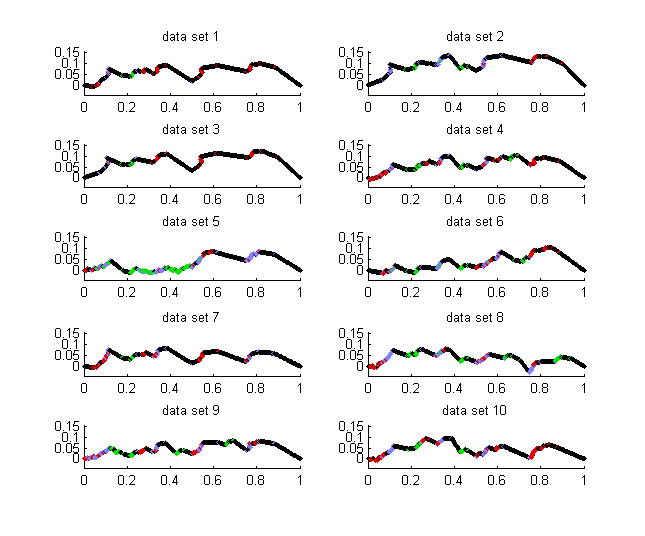} 
\caption{Pseudo-welding curves for the LiDAR data}
\end{figure}

\section{Multiscale Noise Models} \label{sec:noisemodel}
We define two types of multiscale noise models. The first model additively perturbs the product coefficients. The second model is a multiplicative model which defines and  exploits a noise factor function. The noise factor function needs to be constructed only once. 
\subsection{An Additive Multiscale Noise Model for Dyadic Measures} \label{sec:MNMsimple}
We first define a multiscale noise model for a dyadic measure which defines  random perturbations added  to each product coefficient. This model results in noisy measures in the family of dyadic measures. The product coefficients for any sampled noisy measure  can be computed to some scale and compared with the original parameters. 

Given a dyadic probability measure $\mu$ on $X$, $\mu(X) = 1$, defined by a set of product coefficients $a_S \in [-1,1] $ e.g as in Lemma \ref{representationlemma}, the idea is to vary each product coefficient $a_S$ independently by adding a random noise value $B_S$. The new random product coefficient coefficient is then $A_S = a_S + B_S$. The randomness must be constrained so that
\begin{equation}
-1 \leq a_S + B_S \leq 1
\end{equation} 
This is equivalent to
\begin{equation}
-1 - a_S \leq B_S \leq 1 - a_S
\end{equation}

Define a Gaussian-like probability measure for $B_s$, centered at the product coefficient $a_S$,  on the noise  interval $[-1-a_S, 1 - a_S]$. 
Choose $C_{S, \epsilon}$ such that
\begin{equation} \label{eq:C}
C_{S, \epsilon}  \int_{-1 - a_S}^{1 - a_S}  {exp(-\frac{(y - a_S)^2}{2\epsilon^2}) dy} = 1
\end{equation}
Now choose $y = B_S$ randomly from the interval $[-1 - a_S, 1 - a_S]$
using the Gaussian-like probability measure given by the probability density function 
\begin{equation} \label{eq:pdf}
C_{S, \epsilon} exp(-\frac{(a_S - y)^2}{2\epsilon^2}) 
\end{equation}
on $[-1 - a_S, 1 - a_S]$.  The new random product coefficient is $A_S = a_S + b_S$. The probability measure defined for the noise values determines a probability measure on for the random product coefficients that is defined on $[-1,1]$. 

Since the noisy product coefficients $A_S = a_S + B_S$  all are constrained to be in $[-1,1]$, by Lemma \ref{representationlemma} each sample of the random product formula  
\begin{equation} \label{eq:mun}
\mu_{n, Z} = {\mu(X)\displaystyle \prod_{ S \in \mathcal{B}_n}} (1+ (a_S + b_S)h_S) dy.
\end{equation} 
converges to a dyadic measure with total measure $1$. 

\textit{Example}: Let $a_S = 1$. Then probability measure $B_S$ is defined on the  interval $[-2,0]$ and choose $A_S$ so that $A_S = a_s + B_S$. The distribution on $[0,2]$ is
\begin{equation}
C exp(-\frac{(0 - y)^2}{2 \epsilon^2})
\end{equation}
where $C$ satisfies 
\begin{equation}
C \int_{-2}^{0} exp(-\frac{y^2}{2 \epsilon^2}) = 1
\end{equation}
The expectation 
\begin{equation}
C \int_{-2}^{0} y exp(-\frac{y^2}{2 \epsilon^2}) dy < 0
\end{equation}
It is approximately $-\epsilon$ so on average $B_S \approx -\epsilon$ and on average
$A_S = a_S + B_S$ has mean value  $ \approx 1 - \epsilon$.

The definitions of the probability measures  provide a method for sampling from the space of all  positive  measures on a dyadic space $X$  for a noisy variant of the measure that is ``close'' to a given positive measure $\mu$. The measure of closeness can be varied by varying $\epsilon$. It can be used to sample for representations of data sets that are similar to data sets as characterized by their product coefficients. It also can potentially be used to formulate hypothesis tests to conclude that proposed or observed dyadic probability measure is unlikely to be a variant of a given or observed dyadic measure. Such a hypothesis would apply to a broad set class of measures and data sets. 

The method guarantees that the sampled product coefficients satisfy the constraints for product coefficients and provides an approximation to a noise model, rather than an exact noise model. An exact noise model would guarantee that the expected value of $A_S = a_S$ or equivalently the expected value of $B_S = 0$. This is because the expected value of the probability measures for $B_S$ (the mean deviation of $A_s$  from the original product coefficient $a_S$)is always positive, unless $a_S = 0$. The largest deviation comes when $a_S  = \pm 1$. However, it can be made arbitrarily small by letting $\epsilon_S$ approach $0$. 
Then the probability that $|B_S| \geq 1/2$ is $ \approx 3 x 10^{-7}$.

If the expected value of $B_S$ equals 0 for all the dyadic sets $S$,  this is a valid noise model because for each dyadic set $S$ of scale $n$ the expected value $E(\mu_n(S)) = \mu(S)$, where the expected value is taken over the set of noise random variables $B_{S_i}$, $i = 0 .. n$, for the dyadic sets $S_i$  of scale $i$ on the path from the root to the leaf (i.e. the dyadic sets containing $S_i$).  
To see this, note that since the random variables $B_{S_i}$ are independent, the expected value of a path formula $\prod_{i = 0}^{n} {1 \pm (a_S + B_{S_i})}$ 
is the product of the expected values of the path expressions $1 \pm a_{S_i }+ B_{S_i}$. 
Here the signs are determined by the left or right branching of the path at each node on the tree.  Since $E(Z_{S_i}) = 0 $, the expected value of a path formula is the non-noisy measure of the set reached by the path. 

\subsection{A Multiplicative Multiscale Noise Model}  

Next we define a multiplicative multiscale noise model for continuous functions  and then indicate how it applies to Borel measures and dyadic measures. We  define multiscale noise function and then multiply it be a continuous function to obtain an  $\epsilon$ perturbation of a continuous function. The noise function only needs to be constructed once. To construct the multiscale noise function, we will build functions for each scale $2^{-N}$, $N \in \mathbb{Z}$. The goal is to build $M$ functions on each scale, where the $M$ functions are independent of each other, and are built by time shifting the functions $sin(\alpha 2^N x)$ and $cos( \alpha 2^N x)$, where $|\alpha| \approx 1$. One then multiplies those functions by random numbers which are chosen from a Gaussian-like distribution. The goal is to allow rapid recovery of these functions at any point $x$ by storing information at points $j 2^{\pm k}$ and then applying this to the time shifted functions. 

Let $\Phi_0(x)$ be a smooth positive function that is equal to one at $x = 0$ and decays to zero at $x = 5$. We also want $\Phi_0'(5) = 0$. We do the same at $x = -5$, so that $\Phi_0$ is symmetric about zero. We then define $\Phi_{0,k}(x) = \Phi_0(x -k)$, where $k \in \mathbb{Z}$.  We also choose $\Phi_0$ so that $\sum_{k = -\infty}^{k = +\infty} \Phi_{0,k}(x) \equiv 1$ on $\mathbb{R}$. For $n \in \mathbb{Z}$ we define 
$\Phi_{n,k}(x) = \Phi_0(2^{-n}x - k)$, so that $\sum_{k = -\infty}^{k = +\infty} \Phi_{n,k}(x) \equiv 1$.

We then define
\begin{equation}
t(x) = \sum_{k = -\infty}^{k = +\infty} c_k \frac{1}{\sqrt{5}} \sum_{j = -5}^{j = +5} {\beta_{k,j} \Phi_{k,j}(x) }
\end{equation}
where all $c_k \geq 0$, $\sum_{k = -\infty}^{k = +\infty} {c_k} < +\infty$,
and where $\beta_{k,j}$ is chosen randomly from $[-1,1]$, which is given the probability density $\frac{c}{1 + 5 x^2}$. We remark that the choice $c_k = \frac{1}{1 + k^2}$ is particularly interesting. 

Then 
\begin{equation}
T^{*}(x) \equiv \int_0^x {(1 + t(y))dy}
\end{equation} 
Note that $\mathbb{E}(\beta_{k, j}) = 0$, and hence $\mathbb{E}(T^{*}(x)) = 1$ for all $x$.   

We now start building our ``random'' sine functions by setting
\begin{equation}
S_{\lambda}(x) = sin(\lambda 2\pi T^{*}(x) +a) 
\end{equation}
where $a \in [0, 2\pi]$  and $\lambda \in \mathbb{R}$. Here $a \in [0, 2\pi]$ is chosen randomly with respect to Lebesgue measure.  We also define $C_{\lambda}(x) = cos(\lambda 2\pi T^{*}(x) + a')$. 

We now  randomly  choose five different values for $\lambda \in [1/2, 2]$, and a total of ten different clocks (for time), and in the manner above define $S_{\lambda_1}, ..., S_{\lambda_5}$ and 
and $C_{\lambda_1}, ..., C_{\lambda_5}$. Here five is just one choice.  Now let $N \in \mathbb{Z}$ be any integer and consider the scale $2^N$. We pick $\lambda_{N,1}, ...., \lambda_{N,5}$ in the same manner as done for scale $1 = 2^0$. Thus $\lambda_{N,j} \approx 2^{-N}$. We will now define 
\begin{equation}
F(x) = \prod_{N = -\infty}^{+\infty} exp(\frac{\epsilon b_n}{\sqrt{5}} \sum_{j = 1}^{5} (S_{\lambda_{N,j}}(x) + C_{\lambda_{N,j}}(x)) + C_{N,\epsilon})
\end{equation}
where 
\begin{equation}
\mathbb{E}_{[0,2^{N}]} (exp(\frac{\epsilon b_n}{\sqrt{5}} \sum_{j = 1}^{5} (S_{\lambda_{N,j}}(x) + C_{\lambda_{N,j}}(x))) = e^{-C_{N,\epsilon}}
\end{equation}
F is the  multiscale noise function. 
Given a continuous function $G(x)$ on $\mathbb{R}$, we now define 
\begin{equation}
\tilde{G}(x) = G(x)F(x)
\end{equation}
so that $\tilde{G}(x)$ is an ``an $\epsilon$ perturbation of $G(x)$'' if $\sum_{N = -\infty}^{N = +\infty} |b_N| = C$ 
where $C \approx 1$. One should also have $b_0 \approx 1$. 
By rescaling, one can choose any value of $2^N$ and take the function $F$ above and define $F_N(x) = F(2^{-N}x)$. 

Two remarks should be made concerning this method. The first thing to note is that if one uses $G(x) + F(x)$ instead of $G(x)F(x)$, one does not perturb any singular measures. (The simplest example is Dirac mass.) We also note that in the example we have given, we chose to focus on $scale = 1$, because the ``main term'' in the product comes from $scale = 1$. This of course could be done for any given scale. 

Another problem is  perturbation of a Borel measure on $\mathbb{R}$. If, for example, $\mu$ is a Borel measure with Dirac deltas, one may perturb $\mu$ by ``smoothing it out''. The classical way to do this(and there are many more possibilities) is to first convolve $\mu$ with a bump function $\phi$ on $[-\epsilon, +\epsilon]$, where $\phi \geq 0$, $\phi$ is continuous, and $\int_{-\epsilon}^{+\epsilon} {\phi(y)dy} = 1$. The convolution of $\mu$ with $\phi$, then gives one a continuous function as long as $\mu$ is a locally finite measure:
\begin{equation}
\int_a^{a+1} d\mu(y) \leq C
\end{equation}
(Usually one wants this to hold for all $a$.) Let 
\begin{equation}
G(x) = \mu \ast \phi(x)
\end{equation}
so that $G$ is a continuous function on $\mathbb{R}$. As before, we now define
\begin{equation}
\tilde{G} (x) = G(x)F(x)
\end{equation} 
so that $\tilde{G}$ is an $\epsilon$ perturbation of $G$.  

If $\mu$ is a dyadic measure, the scale $n$ approximation $\mu_n$ 
(formula \ref{eq:mun}) is a product of the scale $n$ function
\begin{equation}
G_n = \mu(X)\displaystyle \prod_{ S \in \mathcal{B}_n} (1+a_S h_S)
\end{equation}
with Lebesgue measure $dy$. $G_n$ is continuous on each of the scale $n$ dyadic intervals. Then  $ \tilde{G}_n = G_n F$ is an  $\epsilon$ perturbation of $G_n$, so  the $\epsilon$ perturbation l for $\mu_n$ is
\begin{equation}
\tilde{\mu}_n = \tilde{G}_n dy 
\end{equation}
One could also perturb the smoothed the dyadic scale $n$ histogram.

\section{Summary} \label{sec:summary}

In this paper we presented a theoretical foundation for a representation of a data set  as a  hierarchically parameterized dyadic measure,  visualization of the representation by essentially unique Jordan plane curves, and define both an additive and a multiplicative multiscale noise model.  The theoretical foundation consists of the dyadic product formula representation lemma, a measure visualization theorem. Our approach exploits results in \cite{FeKePi91} \cite{BeurlingAhlfors56} \cite{Ah66}.The dyadic product formula representation lemma shows that an explicitly computable set of product coefficient parameters are sufficient to uniquely characterize measures on dyadic sets. The visualization theorem shows that measures whose product coefficients satisfy a mild condition can be represented by plane Jordan curves and characterizes the uniqueness of the representing curves. The additive multiscale noise model provides a method for sampling dyadic measures that are noisy variants of a given measure. The multiplicative multiscale noise model defines an explicit class of noise factor functions which when multiplied by a continuous function or the function defining a scale $n$ dyadic measure produce an $\epsilon$ perturbation. The noise factor function is general and does not depend on the function or measure that is being perturbed. We illustrated the broad applicability of this representation by showing that the class of dyadic measures includes Borel measures, even Dirac measures, and showing that dyadic sets are equivalent to sets with an ordered set of binary features.  To further illustrate the connection with statistics, we showed that the moments of dyadic  measures can be recursively computed. 

We illustrated the applicability of the representation to real world network and sensor data sets by showing visualizations of the product coefficient parameters using both simple daywheel figures and  easily computable pseudo-welding curves. The illustrations include an explanation of the data pre-processing step which results in a simple measure representation of the data. The parameter representations can be easily recursively computed from the pre-processed data without resorting to convex or non-convex optimization using the recursive definition of the parameters (either bottom up or top down).  Additionally, the parameters can be used as automatically computed features for decision problems involving multiple data sample sets. Since the representation uses the very simple concept of a dyadic tree defined on the universe of a data set and since the parameters are simply and explicitly computable and easily interpretable and visualizable, we hope that this approach will be broadly useful to mathematicians, statisticians, and computer scientists who are intrigued by or involved in data science including its foundations.

\end{document}